\title[Mazur manifolds and corks with small shadow complexities]
{Mazur manifolds and corks with small shadow complexities}
\author[Hironobu Naoe]{Hironobu Naoe}
\address{Tohoku University, Sendai, 980-8578, Japan}
\email{sb3m22@math.tohoku.ac.jp}
\keywords{4-manifold, shadow, exotic, Akbulut's cork. }
\subjclass[2010]{Primary 57R65; Secondary 57M20, 57R55.}
\theoremstyle{plain}
\newtheorem{theorem}{Theorem}[section]
\newtheorem{lemma}[theorem]{Lemma}
\newtheorem{corollary}[theorem]{Corollary}
\newtheorem{remark}[theorem]{Remark}
\theoremstyle{definition}
\newtheorem{definition}[theorem]{Definition}
\newtheorem{example}[theorem]{Example}
\newcommand{\Z}{\mathbb{Z}}
\newcommand{\R}{\mathbb{R}}
\newcommand{\C}{\mathbb{C}}
\newcommand{\Int}{\mathrm{Int}}
\newcommand{\Nbd}{\mathrm{Nbd}}
\newcommand{\gl}{\mathrm{gl}}
\newcommand{\Mobius}{M\"{o}bius }
\long\def\@makecaption#1#2{
  \small
  \vskip\abovecaptionskip
  \sbox\@tempboxa{#1. #2}
  \ifdim \wd\@tempboxa >\hsize
    #1. #2\par
  \else
    \global \@minipagefalse
    \hb@xt@\hsize{\hfil\box\@tempboxa\hfil}
  \fi
  \vskip\belowcaptionskip}
\begin{document}

\begin{abstract}
In this paper we find infinitely many Mazur type manifolds and corks 
with shadow complexity one 
among the 4-manifolds constructed from contractible special polyhedra 
having one true vertex by using the notion of Turaev's shadow. 
We also find such manifolds among 4-manifolds constructed from 
Bing's house. 
Our manifolds with shadow complexity one contain the Mazur manifolds 
$W^{\pm }(l,k)$ which were studied by Akbulut and Kirby. 
\end{abstract}

\maketitle

\section{Introduction}
\label{sec:Introduction}

The study of \textit{corks} is crucial to understand smooth 
structures on 4-manifolds due to the following theorem: 
\textit{For every exotic (i.e. homeomorphic but non-diffeomorphic) 
pair of simply connected closed 4-manifolds, 
one is obtained from the other by removing a contractible 
submanifold of codimension $0$ and gluing it via an involution 
on the boundary. Furthermore, the contractible 
submanifold and its complement can
always be compact Stein 4-manifolds.} 
The contractible 4-manifold has since been called a \textit{cork}. 
This theorem was first proved independently by 
Matveyev \cite{Mat96} and Curtis, Freedman, Hsiang and Stong \cite{CFHS96}, 
and strengthened by Akbulut and Matveyev \cite{AM98} afterward. 
The first cork was found by Akbulut in \cite{Akb91} among \textit{Mazur manifolds}. 
Here a \textit{Mazur manifold} is a contractible 
4-manifold which is not bounded by the 3-sphere 
and has a handle decomposition consisting of 
a single 0-handle, a single 1-handle and a single 2-handle. 
Akbulut and Yasui generalized the example in \cite{Akb91} and 
constructed many exotic pairs of 4-manifolds by using the corks 
$W_n, \overline{W}_n$ in \cite{AY08, AY12}. 

On the other hand,  Turaev introduced the notion of a \textit{shadow} for the 
purpose of study of quantum invariants of 3- and 4-manifolds in 1990s. 
A shadow is an almost-special polyhedron $P$ which is locally flat and properly 
embedded in a compact oriented 4-manifold $W$ with boundary 
and a strongly deformation retract of $W$. 
By the study of Turaev, $W$ is uniquely recovered from the shadow $P$ 
with a coloring assigned to each region of $P$. 
This operation is called Turaev's reconstruction. 
The coloring is a half-integer, called a \textit{gleam}. 
By this reconstruction, a shadow with gleam provides 
a differential structure of $W$ uniquely. 
We refer the reader to \cite{Cos06,Cos08}, 
in which Costantino studied Stein structures, 
Spin$^c$ structures and complex structures on 4-manifolds by using shadows. 
A classification of 4-manifolds with \textit{shadow complexity} zero is 
studied by Martelli in \cite{Mar11}. 
Here the shadow complexity is the minimal number of true vertices 
among all shadows of the 4-manifold. 

In this paper we study contractible 4-manifolds 
constructed from contractible shadows with complexity $1$ or $2$ 
and find infinitely many Mazur type manifolds and corks. 
We first focus on the shadows of complexity $1$. 
In \cite{Ike71,Ike72}, Ikeda classified 
acyclic special polyhedra with one true vertex 
and showed that there exist just two such polyhedra. 
One is called the abalone, shown in Figure \ref{abalone}, 
and the other is a polyhedron shown in Figure \ref{tildeA}.
We denote by $A$ the abalone, and by $\tilde{A}$ the other one. 
They are contractible since 
they are acyclic and simply connected. 
To find corks with shadow complexity $1$, 
we have only to study $A$ and $\tilde{A}$, 
by the above classification. 
Let $A(m,n)$ be the 
compact oriented 4-manifold obtained by 
Turaev's reconstruction from $A$ with 
gleams $\gl (e_1)=m, \gl (e_2)=n$ (see Figure \ref{abalone}), 
and let $\tilde{A}(m,n-\frac{1}{2})$ be the 
one constructed from $\tilde{A}$ with 
gleams $\gl (\tilde{e}_1)=m, \gl (\tilde{e}_2)=n-\frac{1}{2}$ (see Figure \ref{tildeA}). 
Note that the above $n$ and $m$ are integers. 
The main results of this paper in the case of complexity $1$ are the following. 

\begin{figure}
\begin{tabular}{cc}	
\begin{minipage}{0.42\hsize}
	\begin{center}
	\includegraphics[width=3.5cm]{abalone.eps}
	\caption{The abalone has only one true vertex and two regions. 
Let $e_1$ be the disk region in the upper part of the figure, and 
let $e_2$ be the other region. We can check that $e_2$ is a disk easily. 
}
	\label{abalone}
	\end{center}
\end{minipage}
\begin{minipage}{0.05\hsize}
\hspace{1ex}
\end{minipage}
\begin{minipage}{0.42\hsize}
	\begin{center}
	\includegraphics[width=5cm]{tildeA.eps}
	\caption{This polyhedron has only one true vertex and two disk regions. 
This can not be embedded in $\R ^3$. }
	\label{tildeA}
	\end{center}
\end{minipage}
\end{tabular}
\end{figure}

\begin{theorem}
\label{thm:A(m,n)}
If $m\ne 0$, $A(m,n)$ is Mazur type. 
Moreover, $A(m,n)$ and $A(m',n')$ are not homeomorphic unless $m = m'$. 
\end{theorem}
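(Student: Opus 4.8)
The plan is to reduce both assertions to an analysis of the $4$-manifold that Turaev's reconstruction assigns to the abalone, and in particular of its boundary. The first step is to read a handle decomposition of $A(m,n)$ directly off the shadow. Since the abalone embeds in $\R^3$, I can thicken its singular set and its regions in the standard way: the one true vertex together with the triple edges produces the $0$- and $1$-handles, while each of the two disk regions $e_1,e_2$ contributes a $2$-handle whose framing is dictated by the gleams $m$ and $n$ (up to the standard half-twist correction coming from the passages through the true vertex). This yields an explicit Kirby diagram, with two dotted circles and two framed $2$-handles before any simplification.

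Because the abalone is a strong deformation retract of $A(m,n)$ and is contractible, $A(m,n)$ is contractible for \emph{every} choice of gleams, which settles the homotopy-theoretic part of ``Mazur type'' automatically. I would then run handle calculus on the diagram above: the region $e_1$, being a disk bounded by a single triple circle, should give a $2$-handle that geometrically cancels one of the two $1$-handles, leaving a decomposition with a single $0$-handle, a single $1$-handle, and a single $2$-handle, i.e. the handle pattern of a Mazur manifold. To complete the proof that $A(m,n)$ is of Mazur type when $m\ne 0$, it remains to check that $\partial A(m,n)\ne S^3$. For this I would describe $\partial A(m,n)$ by surgery and compute $\pi_1(\partial A(m,n))$ from the simplified diagram, verifying that $m\ne 0$ forces the boundary to be a nontrivial homology sphere while $m=0$ collapses it to $S^3$.

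For the homeomorphism classification the key observation is that any homeomorphism $A(m,n)\to A(m',n')$ restricts to a homeomorphism of boundaries, so it suffices to show that the boundary detects $m$. Using the surgery description from the previous step, I would extract an invariant of $\partial A(m,n)$ that depends only on $m$; the natural candidate is the fundamental group $\pi_1(\partial A(m,n))$, with the Casson invariant as a back-up, and I would show that its isomorphism type (respectively its value) separates distinct values of $m$. This shows the boundaries, hence the $4$-manifolds themselves, are pairwise non-homeomorphic unless $m=m'$.

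The step I expect to be the genuine obstacle is precisely this last one: proving that the boundary invariants really are distinct for distinct $m$. Writing down a presentation of $\pi_1(\partial A(m,n))$ is routine, but deciding when two such groups are isomorphic is not, so the argument will have to hinge on a computable, manifestly $m$-dependent quantity, for instance the homology of a canonical finite cyclic cover or the Casson invariant expressed as an explicit injective function of $m$. A secondary point demanding care is the bookkeeping of framings: translating the integer gleams $m,n$ into the framings and linking numbers of the two $2$-handles, including the half-twist contributions at the true vertex, must be carried out precisely, since any slip there would corrupt both the cancellation in the first part and the boundary computation in the second.
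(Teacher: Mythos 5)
Your outline follows the same route as the paper: derive a Kirby diagram for $A(m,n)$ from the shadow (the paper does this in Lemma~\ref{lem:Kirby diag A(m,n)}, projecting the attaching curves onto a pair of pants $Q_A$ and converting gleams to framings via the twist numbers $tw(C_1)=0$, $tw(C_2)=1$), cancel a 1-handle/2-handle pair to get the Mazur handle pattern, observe contractibility is inherited from the shadow, and then distinguish everything by a boundary invariant. However, the last step is the entire mathematical content of the theorem, and your proposal defers it rather than proves it --- you flag it yourself as ``the genuine obstacle.'' Naming $\pi_1$ as the candidate ``with the Casson invariant as a back-up'' is not an argument: presentations of $\pi_1(\partial A(m,n))$ are easy to write down and essentially useless for telling apart an infinite family of homology spheres, and the Casson invariant is only computable here after concrete work that your plan does not contain.

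Specifically, the paper needs three ingredients that are absent from your proposal: (i) a blow-up together with a dedicated Kirby-move lemma (Lemma~\ref{lem:Kirby}) to convert the surgery description of $\partial A(m,n)$ into $\frac{1}{\varepsilon}$-surgery on a single knot $K_{(m,n)}$ in $S^3$, with $\varepsilon=\mp 1$ according to the sign of $m$ --- without this the surgery formula for the Casson invariant cannot be applied at all; (ii) the observation that $K_{(m,n)}$ is a ribbon knot, so its Alexander polynomial can be computed by Terasaka's method, giving $\mathit{\Delta}_{K_{(m,n)}}(t)=t^{|m|+1}-t^{|m|}-t+3-t^{-1}-t^{-|m|}+t^{-|m|-1}$; and (iii) the surgery formula itself, yielding $\lambda(\partial A(m,n))=\frac{\varepsilon}{2}\mathit{\Delta}_{K_{(m,n)}}''(1)=-2m$, which simultaneously shows $\partial A(m,n)\ne S^3$ for $m\ne 0$ and separates distinct values of $m$ (a homeomorphism of the 4-manifolds restricts to one of the boundaries). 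A further minor inaccuracy: you assert that $m=0$ ``collapses'' the boundary to $S^3$; the paper neither claims nor needs this --- a vanishing Casson invariant does not imply the boundary is $S^3$, and the theorem says nothing about $A(0,n)$.
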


\begin{theorem}
\label{thm:tildeA}$ $
\begin{enumerate}
\item
If $m\ne 0$, $\tilde{A}(m,n-\frac{1}{2})$ is Mazur type. 
Moreover, $\tilde{A}(m,n-\frac{1}{2})$ and 
$\tilde{A}(m',n'-\frac{1}{2})$ are not homeomorphic unless $m = m'$. 
\item
The pair $(\tilde{A}(m,-\frac{3}{2}), \tilde{f}_m)$ is a cork 
if $m<0$. 
\end{enumerate}
\end{theorem}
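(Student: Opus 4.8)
The plan is to prove Theorem \ref{thm:tildeA} in two parts. For part (1), the strategy is to produce an explicit handle decomposition of $\tilde{A}(m,n-\frac{1}{2})$ directly from Turaev's reconstruction applied to the polyhedron $\tilde{A}$, and then to read off that it is Mazur type. Since $\tilde{A}$ has a single true vertex and two disk regions, the reconstruction should yield a 4-manifold built from a $0$-handle, a single $1$-handle (coming from the $1$-skeleton structure of the polyhedron) and a single $2$-handle (attached along a framed knot determined by the gleams $m$ and $n-\frac{1}{2}$). First I would draw the resulting Kirby diagram, tracking how the gleams $\gl(\tilde{e}_1)=m$ and $\gl(\tilde{e}_2)=n-\frac{1}{2}$ determine the framing and the attaching curve relative to the $1$-handle. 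Because $\tilde{A}$ is contractible, the reconstructed $4$-manifold is automatically contractible, so to conclude it is Mazur type I must check that its boundary is not $S^3$; this I would detect from the framed-knot data, typically by computing the fundamental group of the boundary and exhibiting it as nontrivial when $m \neq 0$, or by identifying the boundary as a nontrivial homology sphere.

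For the homeomorphism-classification claim, the key invariant is the boundary $3$-manifold $\partial \tilde{A}(m,n-\frac{1}{2})$, since any homeomorphism of the contractible $4$-manifolds restricts to a homeomorphism of boundaries. I would compute an invariant of $\partial \tilde{A}(m,n-\frac{1}{2})$—the most natural candidate being the fundamental group or its abelianization/Alexander-type data arising from the Kirby presentation—and show it depends only on $m$, detecting $m$ but being insensitive to $n$. The cleanest route is to show the boundary is determined by $m$ alone (for instance realizing it as a Brieskorn-type or surgery description whose presentation manifestly depends on $m$), so that distinct values of $m$ give non-homeomorphic boundaries and hence non-homeomorphic $4$-manifolds. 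The slogan ``not homeomorphic unless $m=m'$'' is exactly the statement that $m$ is a complete invariant for this problem, and I expect the gleam $n$ to affect only the framing in a way that does not change the boundary (a standard handle-slide phenomenon).

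For part (2), to show $(\tilde{A}(m,-\frac{3}{2}),\tilde{f}_m)$ is a cork for $m<0$, I must verify the two defining properties: that $\tilde{A}(m,-\frac{3}{2})$ is a contractible (Mazur type) $4$-manifold, which follows from part (1) with $n=-1$, and that the boundary involution $\tilde{f}_m$ does not extend to a self-diffeomorphism of $\tilde{A}(m,-\frac{3}{2})$. The first property is immediate. The involution $\tilde{f}_m$ should be the one visible from the symmetry of the Kirby diagram—the standard dotted-circle/$0$-framed-knot exchange familiar from Akbulut's cork $W_1$—and I would describe it concretely at the level of the handle picture. The heart of the argument, and the step I expect to be the main obstacle, is proving that $\tilde{f}_m$ does \emph{not} extend to a diffeomorphism of the whole $4$-manifold.

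The standard and most tractable way to obstruct such an extension is to embed $\tilde{A}(m,-\frac{3}{2})$ into a closed (or larger) $4$-manifold $X$ and show that regluing via $\tilde{f}_m$ produces a manifold $X'$ that is homeomorphic but not diffeomorphic to $X$; the existence of such an exotic pair forces $\tilde{f}_m$ to be non-extendable. Concretely I would attempt to identify $\tilde{A}(m,-\frac{3}{2})$ (at least for a specific small value such as $m=-1$) with a known cork such as one of the Akbulut--Yasui corks $W_n$ or $\overline{W}_n$, by manipulating the Kirby diagram through handle slides and isotopies until it matches a published picture together with its twist involution. Once the identification with an established cork-with-involution is achieved, the cork property is inherited, and the infinitude over $m<0$ follows from the classification in part (1) showing these are pairwise non-homeomorphic. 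The delicate point throughout is bookkeeping the half-integer gleam $-\frac{3}{2}$ correctly through the reconstruction and matching it to an integer framing in the Kirby calculus, and ensuring the diagrammatic symmetry realizing $\tilde{f}_m$ genuinely corresponds to a boundary involution rather than merely a diagram symmetry.
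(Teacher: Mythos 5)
Your proposal for part (1) follows the paper's outline (Kirby diagram via Turaev reconstruction, contractibility for free, then a boundary invariant detecting $m$), but it never pins down an invariant that would actually work, and this is where the paper's real content lies. The abelianization you suggest is useless here: all these boundaries are homology spheres, so it is always trivial. Comparing fundamental groups across an infinite family has no proposed mechanism. The paper's route is concrete: after a blow-up and a nontrivial inductive Kirby-move lemma (Lemma \ref{lem:Kirby}), the boundary becomes $\frac{1}{\varepsilon}$-surgery on an explicit ribbon knot $\tilde{K}_{(m,n)}$; its Alexander polynomial is computed by Terasaka's method, and the surgery formula for the Casson invariant gives $\lambda(\partial \tilde{A}(m,n-\frac{1}{2}))=2m$, which simultaneously shows the boundary is not $S^3$ for $m\neq 0$ and distinguishes different $m$. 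Note also that your expected shortcut --- that the boundary itself is independent of $n$ --- is neither needed nor established anywhere; only the Casson invariant is shown to be independent of $n$.

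Part (2) has two genuine gaps. First, a cork is by definition a contractible \emph{Stein} domain; part (1) gives contractibility but you never verify Steinness of $\tilde{A}(m,-\frac{3}{2})$ for general $m<0$ (the paper does this by isotoping the attaching circle to a Legendrian knot in $S^1\times S^2$ with $tb=2$ greater than the framing $0$, invoking the Eliashberg--Gompf criterion), and you also omit checking that $\tilde{f}_m$ extends to a self-homeomorphism (the paper cites Boyer: the boundary is a homology sphere and the manifold is contractible). Second, and more seriously, your non-extension argument rests on identifying $\tilde{A}(m,-\frac{3}{2})$ with an already-published cork. This can work at best for $m=-1$ (indeed $\tilde{A}(-1,-\frac{3}{2})\cong W^{-}(0,0)=W_1$), but the theorem asserts the cork property for \emph{every} $m<0$; the pairwise non-homeomorphism from part (1) shows the family is infinite but does not transfer the cork property from one member to another --- if anything, it shows these manifolds cannot all be matched to a single known cork. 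The paper instead runs the Akbulut--Yasui argument uniformly in $m$: attach a $2$-handle to $\tilde{A}(m,-\frac{3}{2})$ in two ways to get $\tilde{A}_1(m)$ and $\tilde{A}_2(m)$, where $\tilde{A}_2(m)$ is obtained from $\tilde{A}_1(m)$ by cutting out $\tilde{A}(m,-\frac{3}{2})$ and regluing via $\tilde{f}_m$; these are homeomorphic because $\tilde{f}_m$ extends topologically, yet $\tilde{A}_1(m)$ is Stein while $\tilde{A}_2(m)$ contains a $2$-sphere of self-intersection $-1$ and so cannot be Stein (Lisca--Mati\'c), whence they are not diffeomorphic and $\tilde{f}_m$ cannot extend smoothly. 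Without this (or an equivalent uniform obstruction), your proof covers only isolated values of $m$.
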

Here $\tilde{f}_m$ is an involution on $\partial \tilde{A}(m,-\frac{3}{2})$, which 
will be defined in Section~\ref{sec:Proofs}. 

The following is a straightforward consequence of 
Theorem \ref{thm:A(m,n)}, \ref{thm:tildeA} and \cite{Ike71}. 
\begin{corollary}
\label{cor:shadow complexity}$ $
\begin{enumerate}
\item
There are no corks with shadow complexity $0$. 
\item
A cork with shadow complexity $1$ is either 
$A(m,n)$ or $\tilde{A}(m,n-\frac{1}{2})$. 
In particular, 
there are infinitely many corks with shadow complexity $1$ 
since there are infinitely many corks among $\tilde{A}(m,n-\frac{1}{2})$.
\end{enumerate}
\end{corollary}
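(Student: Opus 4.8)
The plan is to prove the two parts separately, deducing each from Ikeda's classification of low-complexity contractible shadows together with Theorems~\ref{thm:A(m,n)} and~\ref{thm:tildeA}.

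For part (1) I would first record that $B^4$ can never be a cork: since $\partial B^4 = S^3$ and every self-diffeomorphism of $S^3$ is isotopic either to the identity or to a reflection, each such map extends to a self-diffeomorphism of $B^4$, so the defining property of a cork (a boundary involution that extends to a self-homeomorphism but to no self-diffeomorphism) is impossible. It then suffices to show that $B^4$ is the \emph{only} contractible $4$-manifold of shadow complexity $0$. A shadow with no true vertices has singular set a disjoint union of circles, and there is no acyclic special polyhedron of this form other than the degenerate point-shadow; this is implicit in Ikeda's analysis \cite{Ike71} and is also covered by Martelli's classification of shadow complexity zero \cite{Mar11}. Hence any contractible complexity-$0$ manifold reconstructs from a point and is $B^4$, so no cork has shadow complexity $0$.

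For part (2), let $(C,\tau)$ be a cork of shadow complexity $1$ and let $P$ be a shadow of $C$ with exactly one true vertex. Because $P$ is a strong deformation retract of the contractible manifold $C$, the polyhedron $P$ is itself contractible, i.e.\ acyclic and simply connected. By the classification of Ikeda \cite{Ike71, Ike72}, an acyclic special polyhedron with exactly one true vertex is homeomorphic either to the abalone $A$ or to $\tilde{A}$. The gleam of each region is a priori a half-integer, but the combinatorics of $A$ (resp.\ $\tilde{A}$) fixes its residue modulo $1$: on $A$ both gleams are integers, whereas on $\tilde{A}$ one of the two gleams is forced to be a half-integer. Writing the gleams in the normalized forms of Figures~\ref{abalone} and~\ref{tildeA}, we conclude that $C$ is diffeomorphic to $A(m,n)$ or to $\tilde{A}(m,n-\tfrac{1}{2})$ for suitable integers $m,n$.

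The infinitude then follows from Theorem~\ref{thm:tildeA}: part~(2) of that theorem yields a cork $(\tilde{A}(m,-\tfrac{3}{2}),\tilde{f}_m)$ for every integer $m<0$, and part~(1) guarantees that $\tilde{A}(m,-\tfrac{3}{2})$ and $\tilde{A}(m',-\tfrac{3}{2})$ are non-homeomorphic whenever $m\ne m'$; each of these is a cork, so by part~(1) of the present corollary its complexity is not $0$, hence exactly $1$. This produces infinitely many pairwise non-homeomorphic corks of shadow complexity $1$. The steps I expect to require the most care are the uniqueness claim in part~(1) that $B^4$ is the only contractible complexity-$0$ manifold, and the parallel gleam-integrality bookkeeping in part~(2); both rest on understanding which special polyhedra and which gleam assignments can actually occur, rather than on the homeomorphism invariants already supplied by Theorems~\ref{thm:A(m,n)} and~\ref{thm:tildeA}, and everything else is a direct assembly of those results with Ikeda's classification.
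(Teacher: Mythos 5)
Your overall strategy coincides with the paper's: both parts are deduced from Ikeda's classification of acyclic special polyhedra with at most one true vertex together with Theorems~\ref{thm:A(m,n)} and~\ref{thm:tildeA}, and your treatment of part (2) and of the infinitude claim is exactly the paper's, with the useful extra detail (left implicit in the paper) that each cork $\tilde{A}(m,-\frac{3}{2})$, $m<0$, has complexity \emph{exactly} $1$ because part (1) rules out complexity $0$. Where you diverge is part (1). The paper's argument is a one-liner: by \cite{Ike71} there is no acyclic special polyhedron with zero true vertices at all, so a contractible 4-manifold (cork or not) admits no such shadow; the paper never needs to identify the contractible complexity-zero manifolds, nor to argue that $B^4$ is not a cork. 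Your route instead classifies all contractible complexity-zero manifolds as $B^4$ and then excludes $B^4$ via Cerf's theorem; this is a legitimate alternative, but it trades a non-existence statement for a genuinely harder classification question.

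That harder question is where your proposal has a real gap: the inference ``a shadow with no true vertices \ldots{} hence any contractible complexity-$0$ manifold reconstructs from a point'' does not follow from Ikeda. Shadows are only \emph{almost-special} polyhedra: their regions need not be disks, so a vertex-free shadow can have annular or higher-genus regions glued to circles of triple points, and Ikeda's classification (which concerns special polyhedra, all of whose regions are disks) says nothing about these; moreover a point is not a special polyhedron, so the sentence mixing the two notions is not a proof. To make your part (1) work one must genuinely invoke Martelli's classification \cite{Mar11} (which you cite only as a backup), or simply revert to the paper's argument. A second, smaller omission: you assume a cork of complexity $1$ possesses an honest shadow with one true vertex; under the paper's definition of shadow complexity, which treats manifolds with 3- or 4-handles by a separate clause, this is precisely why the paper records that a cork is a Stein domain and hence has a handle decomposition without 3- or 4-handles \cite{Eli90}. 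Finally, note that passing from ``almost-special shadow with one true vertex'' to Ikeda's list of \emph{special} polyhedra $A$, $\tilde{A}$ in part (2) is a gap you share with the paper itself, so it is not counted against you; but it is the same speciality issue you correctly worried about in part (1).
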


\begin{figure}
	\begin{center}
	\includegraphics[width=4cm]{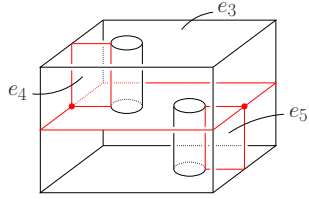}
	\caption{Bing's house has two true vertices and three disk regions. 
Let $e_4$ and $e_5$ be the disk regions which are partitions put first and second floor respectively, 
and let $e_3$ be the last region. 
We can check that $e_3$ is a disk easily. }
	\label{Bing}
	\end{center}
\end{figure}

Next we study Bing's house, which is a special polyhedron with two true vertices
as shown in Figure \ref{Bing}. 
Bing's house was introduced by Bing in \cite{Bin64}. 
We denote it by $B$ and 
let $B(l,m,n)$ be the 
compact oriented 4-manifold obtained by Turaev's reconstruction 
from $B$ with gleams $\gl (e_3)=l,\gl (e_4)=m,\gl (e_5)=n$, 
where $l,m,n$ are integers.  
For $B(l,m,n)$, we get the following. 
\begin{theorem}
\label{thm:B(l,m,n)} $ $
\begin{enumerate}
\item
If $|m|\geq 3$ and $|n|\geq 3$, 
then $B(l,m,n)$ is Mazur type. 
\item
The pair $(B(0,m,n),f_{(m,n)})$ is a cork if $m$ and $n$ are negative. 
\end{enumerate}
\end{theorem}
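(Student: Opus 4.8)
The plan is to convert the shadowed polyhedron $B$ with gleams $l,m,n$ into a Kirby diagram by the same reconstruction procedure already applied to $A$ and $\tilde A$, and then to analyze that diagram by handle calculus. Bing's house has two true vertices, a singular graph (its $1$-skeleton) with prescribed triple edges, and three disk regions $e_3,e_4,e_5$; reading off the attaching curves of the $2$-handles along the $1$-skeleton and assigning framings determined by the gleams $l,m,n$ produces a handle diagram with a controlled number of $1$- and $2$-handles. Since Bing's house is contractible (it is acyclic and simply connected), the reconstructed $B(l,m,n)$ is contractible for every choice of gleams; thus for part (1) it remains only to exhibit the Mazur handle shape and to show the boundary is not $S^3$, and for part (2) it remains to produce the involution and to verify the cork property.

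For part (1), I would simplify the diagram by Kirby moves. The hypotheses $|m|\ge 3$ and $|n|\ge 3$ are there to guarantee that the relevant handle slides and cancellations succeed, so that the diagram reduces to a single $1$-handle (a dotted circle) together with a single $2$-handle---the defining shape of a Mazur manifold---while at the same time excluding the degenerate outcome $B(l,m,n)\cong B^4$. To rule out $\partial B(l,m,n)\cong S^3$ I would extract a presentation of $\pi_1(\partial B(l,m,n))$ from the simplified diagram and show that it is a nontrivial group throughout the stated range. As $\partial B(l,m,n)$ is an integral homology sphere, nontriviality of $\pi_1$ is exactly what separates it from $S^3$ and certifies the Mazur condition.

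For part (2) I set $l=0$ and take $m,n<0$. With $l=0$ the handle diagram of $B(0,m,n)$ acquires a symmetry---coming from the exchange of the two floors $e_4$ and $e_5$ of Bing's house---under which the dotted circle of the $1$-handle and a framed circle of the $2$-handle are interchanged; this is precisely the data of a Mazur (cork) twist, and I would define $f_{(m,n)}$ to be the induced involution of $\partial B(0,m,n)$. Contractibility is already established above, and the hypotheses $m,n<0$ are the negativity conditions under which Costantino's shadow criterion \cite{Cos08} endows $B(0,m,n)$ with a Stein structure, so $B(0,m,n)$ is a compact contractible Stein $4$-manifold as the definition of a cork demands.

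The main obstacle is to show that $f_{(m,n)}$ does not extend to a self-diffeomorphism of $B(0,m,n)$, which is what upgrades the contractible Stein pair to a genuine cork. Extension as a homeomorphism is automatic by Freedman's theorem, since the involution of the homology-sphere boundary extends over the essentially unique contractible topological filling. For the smooth non-extension I would identify $(B(0,m,n),f_{(m,n)})$, for suitable parameters, with members of the Akbulut--Kirby and Akbulut--Yasui families $W^{\pm}(l,k)$, $W_k$, $\overline{W}_k$ carrying their standard cork twists---an identification foreshadowed by the appearance of these Mazur manifolds among our examples---and then invoke the known gauge-theoretic obstruction to smooth extension \cite{AY08}; alternatively, one embeds $B(0,m,n)$ into a closed $4$-manifold and shows that the twist changes its diffeomorphism type, detecting the change by a Seiberg--Witten or Heegaard--Floer invariant. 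I expect this identification-and-obstruction step to be the crux, the handle calculus of part (1) being comparatively routine.
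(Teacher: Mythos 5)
Your proposal has genuine gaps in both parts, and in each case the gap sits exactly at the step you defer.

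In part (1) you misread where the hypotheses enter. The reconstruction gives a Kirby diagram for $B(l,m,n)$ with exactly one dotted circle and one $2$-handle for \emph{all} gleams $l,m,n$; no handle slides or cancellations require $|m|,|n|\geq 3$, so the ``Mazur handle shape'' is unconditional. The bounds are needed precisely for the step you leave as a plan: showing $\partial B(l,m,n)\not\cong S^3$. Your proposal to extract a presentation of $\pi_1(\partial B(l,m,n))$ and ``show that it is a nontrivial group throughout the stated range'' is the hard point, and there is no routine method for certifying nontriviality of a family of group presentations depending on three integer parameters. The paper avoids this entirely with hyperbolic geometry: since Bing's house is a \emph{special} polyhedron, $\partial B(l,m,n)$ is obtained by Dehn filling along its regions, the slope lengths are $\sqrt{4l^2+100}$, $\sqrt{4m^2+1}$, $\sqrt{4n^2+1}$, and when $|m|,|n|\geq 3$ all three exceed $6$, so the boundary is hyperbolic by the $6$-theorem (cf.\ \cite{Lac00,CT08,IK14}); in particular it is not $S^3$. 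Without some input of this kind your argument for part (1) does not close.

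In part (2) the crux --- smooth non-extension of $f_{(m,n)}$ --- is proposed by identifying $(B(0,m,n),f_{(m,n)})$ with the known corks $W^{\pm}(l,k)$, $W_k$, $\overline{W}_k$ and quoting their obstructions. This cannot work for the whole family: the only such identification available is $B(0,-1,-1)\cong \overline{W}_1$, and for general $m,n<0$ these are new corks, which is the point of the theorem; your fallback (embed in a closed manifold and detect the twist by Seiberg--Witten) is a program, not an argument. The paper instead runs the Akbulut--Yasui mechanism \cite{AY08} internally and self-containedly: attach a $2$-handle to $B(0,m,n)$ in two ways to get $B_1(m,n)$ and $B_2(m,n)$, where $B_2(m,n)$ is obtained from $B_1(m,n)$ by regluing $B(0,m,n)$ via $f_{(m,n)}$; then $B_1(m,n)$ is Stein, while $B_2(m,n)$ contains a smoothly embedded $2$-sphere of self-intersection $-1$ and hence admits no Stein structure by \cite{LM98}; since a diffeomorphism extending $f_{(m,n)}$ would force $B_1(m,n)\cong B_2(m,n)$, none exists. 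Two further inaccuracies: the Stein structure on $B(0,m,n)$ is not obtained from a gleam-negativity shadow criterion but from an explicit Legendrian realization of the $2$-handle with Thurston--Bennequin number $2$, which exceeds the framing $0$ (Eliashberg--Gompf \cite{Eli90,Gom98}) --- this computation is where $m,n<0$ is actually used; and the symmetry defining $f_{(m,n)}$ is not the exchange of the two floors $e_4,e_5$ of Bing's house (that would require $m=n$), but the symmetry of the Kirby diagram interchanging the dotted circle with the $0$-framed attaching circle, which is why $l=0$ is required.
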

Here $f_{(m,n)}$ is an involution on $\partial B(0,m,n)$, which 
will be defined in Section \ref{sec:Proofs}. 

In Section \ref{sec:Preliminaries} we introduce the notions of shadows, 
Mazur manifolds, Stein surfaces 
and corks, and how to interpret from a Kirby diagram to a shadow. 
We give the proofs of our theorems in Section \ref{sec:Proofs}. 
Main tool in the proofs is Kirby calculus. 
To distinguish topological types of $A(m,n,)$'s and of $\tilde{A}(m,n-\frac{1}{2})$'s 
in Theorem \ref{thm:A(m,n)} and \ref{thm:tildeA} respectively, 
we compute the Casson invariants of their boundaries. 
To find corks among $\tilde{A}(m,n-\frac{1}{2})$ and $B(l,m,n)$, 
we check their Stein structures. 
The strategy is same as one used in \cite{AY08}. 

\subsection*{Acknowledgments. }
The author wishes to express his 
gratitude to his advisor, Masaharu Ishikawa, 
for encouragement and many helpful suggestions. 

\section{Preliminaries}
\label{sec:Preliminaries}

Throughout this paper, we work in smooth category unless otherwise mentioned. 

\begin{figure}
	\begin{center}
	\includegraphics[width=10cm]{local_model.eps}
	\caption{}
	\label{local model}
	\end{center}
\end{figure}

\subsection{Shadows}
\label{subsec:Shadows}

A compact topological space $P$ is called an {\it almost-special polyhedron} 
if each point of $P$ has a regular neighborhood which is homeomorphic to 
one of the five local models shown in Figure \ref{local model}. 
A point whose regular neighborhood is of type (iii) is called a {\it true vertex}. 
We denote the set of true vertices by $V(P)$. 
The {\it singular set} of $P$ is the set of points whose regular neighborhoods are 
of type (ii), (iii) or (v). We denote it by $Sing(P)$. 
The boundary $\partial P$ of $P$ is the set of points 
whose regular neighborhoods are of type (iv) or (v). 
Each component of $P\setminus Sing(P)$ is called a {\it region} of $P$. 
If a region $R$ contains points of type (iv) 
then $R$ is called a {\it boundary region}, 
and otherwise it is called an {\it internal region}. 
Each region is a surface. 
If each region of $P$ is homeomorphic to an open disk and 
any connected component of $Sing(P)$ contains at least one true vertex, 
then $P$ is said to be {\it special}. 
Each connected component of $Sing(P)\setminus V(P)$ is called a {\it triple line}. 

\begin{definition}
Let $W$ be a compact oriented 4-manifold 
and let $T$ be a (possibly empty) trivalent graph in the boundary $\partial W$ of $W$.
An almost-special polyhedron $P$ in $W$ is called a {\it shadow} of $(W,T)$ 
if the following hold:
\begin{itemize}
\item $W$ is collapsed onto $P$,
\item $P$ is locally flat in $W$, that is, for each point $p$ of $P$ there exists 
a local chart $(U,\phi)$ of $W$ around $p$ such that 
$\phi (U\cap P)\subset \R^3 \subset \R^4$ and
\item $P\cap \partial W=\partial P=T$. 
\end{itemize}
\end{definition}

It is well-known that any compact oriented 4-manifold having a handle decomposition without 
3- or 4-handles has a shadow \cite{Cos05}.

For a pair of topological spaces $X$ and $Y$ with $X\subset Y$, 
we denote a regular neighborhood of $X$ in $Y$ by $\Nbd (X;Y)$. 
Let $R$ be an internal region of an almost-special polyhedron $P$ 
and let $\bar{R}$ be a compact surface such that the interior of $\bar{R}$ is 
homeomorphic to $R$. 
The inclusion $i:R\to P$ can extend to a continuous map $\bar{i}:\bar{R}\to P$ 
such that $\bar{i}|_{\Int(\bar{R})}$ is injective and its image is the closure of $R$ in $P$. 
For each point $x\in \bar{i}(\partial \bar{R})$, 
we can see that, locally, two regions is attached to $\bar{R}$ along $\partial \bar{R}$. 
Under this identification, 
for each boundary component of $\bar{R}$, 
there exists an immersed annulus or a \Mobius band 
in $\Nbd (\bar{i}(\partial \bar{R});P)$. 
Let $N$ be the number of the \Mobius bands as above. 
For each internal region $R$, we choose a half integer $\gl (R)$ 
such that the following holds: 
\[
\gl (R)-\frac{1}{2}N \in \Z .
\]
We call $\gl (R)$ a {\it gleam} of $R$ 
and the correspondence $\gl$ a \textit{gleam} of $P$. 

An almost-special polyhedron $P$ endowed with a gleam 
is called an {\it integer shadowed polyhedron}, 
and denoted by $(P,\gl )$ (or simply $P$) .

Turaev showed that 
there exists a canonical mapping associating to 
an integer shadowed polyhedron $(P,\gl )$ 
a compact oriented smooth 4-manifold, denoted by $M_P$, in \cite{Tur94}. 
This is called \textit{Turaev's reconstruction}. 
The method of this reconstruction is analogous to 
a process of attaching handles. 
Conversely, if a 4-manifold $M$ has a shadow $P$ then 
$P$ can be equipped with the canonical gleam $\gl$ such that 
the 4-manifold $M_P$ reconstructed from $(P,\gl )$ is diffeomorphic to $M$. 

\subsection{Interpretation from a Kirby diagram to a shadow}
\label{subsec:Interpretation from a Kirby diagram to a shadow}
Now we introduce a method to obtain a shadow of a 4-manifold 
which is given by a Kirby diagram. 
We follow the method in \cite[Chapter IX. 3.2.]{Tur94} and \cite{Cos05}. 

Let $\mathcal{D} = 
(\bigsqcup _{i=1}^{k} L_i^1)\amalg (\bigsqcup _{j=1}^{l} L_j^2)
\subset S^3$ be a Kirby diagram, where $L_i^1$ ($i=1,\ldots ,k$) 
is a dotted circle of a 1-handle and $L_j^2$ ($j=1,\ldots ,l$) 
is a attaching circle of a 2-handle 
with framing coefficient $n_j$. 
We arbitrarily take an almost-special polyhedron $Q$ 
in $S^3 \setminus (\bigsqcup _{i=1}^{k} L_i^1)$ 
such that $S^3 \setminus (\bigsqcup _{i=1}^{k} L_i^1)$ collapses onto $Q$. 
By isotopy in $S^3 \setminus (\bigsqcup _{i=1}^{k} L_i^1)$, 
we project $\bigsqcup _{j=1}^{l} L_j^2$ to $Q\setminus V(Q)$ 
such that each crossing point is a double point and not on $Sing(Q)$. 
We denote the image by $C=\bigcup _{j=1}^{l} C_j$. 
We then assign an over/under information to each crossing point 
such that the link restored from $C$ according to the over/under information 
is isotopic to $\bigsqcup _{j=1}^{l} L_j^2$. 
\begin{figure}
\begin{tabular}{ccc}
\begin{minipage}{0.64\hsize}
\begin{center}
	\includegraphics[height=1.8cm]{Turaev_framing.eps}
	\caption{The first figure indicates a crossing point of a 
projection of a link component on an almost-special polyhedron. 
Then we can take the framing with respect to this almost-special 
polyhedron as in the second figure. }
	\label{framing w.r.t. Q}
\end{center}
\end{minipage}
\begin{minipage}{0.07\hsize}
\hspace{1ex}
\end{minipage}
\begin{minipage}{0.24\hsize}
\begin{center}
\vspace{-4mm}
	\includegraphics[height=2cm]{Turaev_framing2.eps}
	\caption{The framing is the hatched band. }
	\label{framing w.r.t. Q2}
\end{center}
\end{minipage}
\end{tabular}
\end{figure}

\begin{definition}
\label{def:framing with respect to Q. }$ $
\begin{enumerate}
\item
We call the \textit{framing with respect to} $Q$ \textit{of} $C_j$ 
an embedded annulus or \Mobius band in $S^3$ obtained 
by taking a small regular neighborhood of $C$ in $Q$ and 
splitting it at each crossing point 
according to the over/under information as in Figure \ref{framing w.r.t. Q}. 
Here if $C$ runs over a triple line, 
we cut off the part which $C$ does not lie 
from $\Nbd (C;Q)$
as in Figure \ref{framing w.r.t. Q2}. 
\item
Let $F_j$ be the framing with respect to $Q$ of $C_j$. 
If $F_j$ is an annulus, 
\textit{the twist number of $F_j$} is defined by the linking number of 
the link $S^1 \amalg S^1 = \partial F_j \subset S^3$ whose orientations are 
chosen to be parallel. 
If $F_j$ is a \Mobius band, 
\textit{the twist number of $F_j$} is defined by 
the half of the linking number of the link 
$S^1 \amalg S^1 = \partial F_j \amalg \text{(a core of $F_j$)} \subset S^3$ 
whose orientations are chosen to be parallel. 
We denote the twist number of $F_j$ by $tw(C_j)$. 
\end{enumerate}
\end{definition}

\begin{figure}
\begin{center}
	\includegraphics[width=8cm]{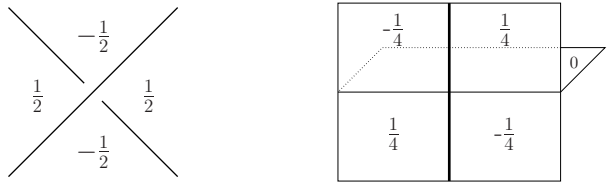}
	\caption{
The local contributions to gleams: 
The left figure indicates the local contributions at s self-crossing point of $C$, 
and the right one indicates those at a crossing point of $C$ and $Sing(Q)$. 
}
	\label{the contribution}
\end{center}
\end{figure}

Let $P$ be the almost-special polyhedron obtained 
from $Q$ by attaching a disk $D_j$ to each curve $C_j$ along its boundary. 
Note that $P$ is not necessarily embedded in $S^3$. 
We define the gleam $\gl$ of $P$ in the following way. 
\begin{itemize}
\item
Let $R$ be an internal region of $P$ in the subdivision of $Q$ by $C\cup Sing(Q)$. 
To four separated regions in a small regular neighborhood of 
each self-crossing point of $C$ or each crossing point of $C$ and the triple line of $Q$, 
we assign rational numbers as shown in Figure \ref{the contribution} 
(cf. \cite{Cos05, Tur94}). 
We define $\gl(R)$ by the sum of these local contributions 
for all crossing points of $C\cup Sing(Q)$ to which $R$ is adjacent. 
\item
The gleam of the region $\Int (D_j)$ is defined by $n_j - tw(C_j)$. 
\end{itemize}
Thus we get an integer shadowed polyhedron $(P,\gl )$. 

\begin{lemma}
\label{lem:Kirby to shadow}
The 4-manifold reconstructed from $(P,\gl )$ is 
diffeomorphic to the 4-manifold given by the Kirby diagram $\mathcal{D}$. 
\end{lemma}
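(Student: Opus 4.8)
The plan is to realize both $4$-manifolds by the same two-stage handle-building procedure and to compare them stage by stage, using the fact that Turaev's reconstruction mimics the attaching of handles. Write $W_1$ for the $4$-manifold built from the $0$-handle and the $1$-handles of $\mathcal{D}$ alone, so that $W_1=\natural^{k}(S^1\times B^3)$. Since $S^3\setminus(\bigsqcup_{i=1}^{k}L_i^1)$ collapses onto $Q$, I would first check that $Q$, taken with the zero gleam, is a shadow of $W_1$, that is, that the reconstructed manifold $M_Q$ is diffeomorphic to $W_1$; this is the base case, and the disk regions $\Int(D_j)$ will then be handled one at a time. Taking the gleam of $Q$ to be zero is what makes the later formula $\gl(R)=\sum(\text{local contributions})$ legitimate, since then all twisting recorded on the subdivided regions of $Q$ comes from $C$ and not from $Q$ itself.

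Next I would show that enlarging $Q$ to $P=Q\cup\bigcup_{j=1}^{l}D_j$ corresponds, under the reconstruction, to attaching one $4$-dimensional $2$-handle for each disk $D_j$. Because the reconstruction glues a thickened disk along a region boundary, adjoining $\Int(D_j)$ attaches a $2$-handle whose attaching circle is a push-off of $C_j$ into $\partial M_Q=\partial W_1$. Under the collapse of $S^3\setminus(\bigsqcup_i L_i^1)$ onto $Q$ together with the chosen over/under data, this push-off is isotopic in $\partial W_1$ to the original attaching circle $L_j^2$, so the attaching circles of the two handle decompositions agree and only the framings remain to be reconciled.

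The framing comparison is the heart of the argument and the step I expect to be the main obstacle. The framing of the $2$-handle produced by the reconstruction is measured relative to the reference framing given by the band $F_j$, the framing with respect to $Q$; by the definition of $tw(C_j)$ this band differs from the $0$-framing of $C_j$ in $S^3$ by exactly $tw(C_j)$ twists (or half-twists, in the \Mobius case). The total framing measured in $S^3$ is therefore $\gl(\Int(D_j))+tw(C_j)=(n_j-tw(C_j))+tw(C_j)=n_j$, precisely the framing coefficient of $L_j^2$; indeed the gleam was defined as $n_j-tw(C_j)$ for exactly this reason. To justify the reference-framing identity I would carry out a local analysis at each self-crossing of $C$ and at each crossing of $C$ with a triple line of $Q$: the splitting and cutting of $\Nbd(C;Q)$ prescribed in Figures \ref{framing w.r.t. Q} and \ref{framing w.r.t. Q2} alter both the reference framing and the neighboring regions, and the rational numbers recorded in Figure \ref{the contribution} are exactly the corrections that make these local alterations cancel globally. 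The delicate point is checking that the signs and the half-integer entries of Figure \ref{the contribution} are consistent with the over/under information, in particular that the \Mobius case contributes the correct half-twist; once this bookkeeping is confirmed the two handle decompositions coincide and the diffeomorphism follows. This local verification is the computation carried out in \cite[Chapter IX.3.2]{Tur94} and \cite{Cos05}, which I would follow.
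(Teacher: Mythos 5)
Your proposal is correct and takes essentially the same route as the paper: both arguments come down to checking that the gleam $n_j - tw(C_j)$ assigned to the disk region exactly compensates the twisting of the reference framing $F_j$, so that the reconstructed 2-handle has framing coefficient $n_j$, and both defer the local verification of the contributions in Figure \ref{the contribution} to \cite[Chapter IX.3.2]{Tur94} and \cite{Cos05}. The only difference is organizational: you phrase the comparison as an induction over handles (the 1-handlebody with shadow $Q$ as base case, one 2-handle per disk $D_j$), whereas the paper states only the framing verification for the disk regions and refers everything else to the same sources.
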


\begin{proof}
We only give a sketchy proof of this lemma.
For details we refer the reader to \cite{Tur94} and \cite{Cos05}. 
We only verify that 
the gleam of the region $R_j$, which is the interior of $D_j$, 
is compatible with attaching the 2-handle. 
The framing of the 2-handle corresponding to $L_j^2$ is represented by 
a knot $\hat{L}_j^2$ parallel to $L_j^2$. 
Let $B_j$ be an annulus whose boundaries are $L_j^2$ and $\hat{L}_j^2$. 
If $B_j$ can embed in the framing $F_j$ with respect to $Q$ of $C_j$ 
by isotopy sending $L_j^2$ to $C_j$, 
the gleam of $R_j$ is $0$ by \cite{Tur94}. 
Since the framing coefficient $n_j$ is defined by $lk(L_j^2, \hat{L}_j^2)$ and 
the gleam of $R_j$ increases by the number 
of the twists of $B_j$ with respect to $F_j$, 
we conclude that the gleam of $R_j$ is given by $n_j - tw(C_j)$. 
\end{proof}

Note that, gthe gleam of a boundary regionh can be defined as above but 
it does not affect the reconstruction. 
Therefore, if the boundary region of $P$ can collapse to the singular set $Sing(P)$, 
the resulting polyhedron is also a shadow of $M_P$. 
Two or more regions may be connected by this collapsing. 
In this case the gleam of the new region is given by 
the sum of the original gleams before connecting. 

\subsection{Mazur manifolds and Akbulut's corks}
\label{subsec:Mazur manifolds and Akbulut's corks}
\begin{definition}
A compact contractible 4-manifold which is not a 4-ball is called a 
{\it Mazur manifold} if it is obtained by attaching a 2-handle to $D^3\times S^1$. 
If a Mazur manifold is not bounded by the 3-sphere then it is said to be {\it Mazur type}.
\end{definition}

\begin{figure}
\begin{center}
	\includegraphics[height=3.1cm]{W_lk.eps}
	\caption{}
	\label{W(l,k)}
\end{center}
\end{figure}

In \cite{Maz61}, Mazur introduced Mazur manifolds $W^{\pm}(l,k)$, 
described in Figure \ref{W(l,k)}, 
which were studied by Akbulut and Kirby in \cite{AK79}. 
Remark that any compact contractible 4-manifold is bounded by a homology 3-sphere. 
In particular, if a shadow is contractible then its 4-manifold is also. 
Therefore such a 4-manifold always becomes a candidate of a Mazur manifold. 

Now we introduce the definition of a \textit{cork}, which was first found by Akbulut 
among Mazur manifolds in \cite{Akb91}. 

\begin{definition}
Let $C$ be a contractible Stein domain and 
let $\tau :\partial C\to \partial C$ be an involution on the boundary of $C$.
The pair $(C,\tau )$ is called a {\it cork} if 
$\tau$ extends to a self-homeomorphism on $C$, 
but can not extend to any self-diffeomorphism on $C$.
\end{definition}

Here a {\it Stein manifold} is a complex manifold $X$ such that 
$X$ can be embedded into $\C ^N$ by a proper holomorphic map. 
A compact 4-manifold $W$ with boundary is called a {\it Stein domain} if 
there exists a Stein 4-manifold $X$ with 
a plurisubharmonic function $f:X \to [0,\infty)$ 
and a regular value $a$ of $f$ such that $f([0,a])\cong W$. 

\begin{example}
\label{ex:cork}
Akbulut and Yasui constructed many corks $(W_n,f_n)$ ($n\geq 1$) in \cite{AY08}. 
Note that $W_1$ is just $W^{-}(0,0)$. 
They defined $f_n:\partial W_n\to \partial W_n$ by the involution 
obtained by first surgering $S^1 \times B^3$ to $B^2 \times S^2$ 
in the interior of $W_n$, 
and then surgering the other embedded $B^2 \times S^2$ back to $S^1 \times B^3$. 
We notice that the Kirby diagrams of $W_n$ in \cite{AY08} is a symmetric link, and 
the involution can be done by replacing the dot and g$0$h in the diagram. 
\end{example}

\section{Proofs}
\label{sec:Proofs}
\begin{figure}
\begin{tabular}{ccc}
\begin{minipage}{0.42\hsize}
	\begin{center}
	\includegraphics[width=4.25cm]{sing_abalone.eps}
	\caption{The almost-special polyhedron $P_A$.}
	\label{Nbd(abalone)}
	\end{center}
\end{minipage}
\begin{minipage}{0.05\hsize}
\hspace{1em}
\end{minipage}
\begin{minipage}{0.42\hsize}
	\begin{center}
	\includegraphics[width=3.4cm]{pants.eps}
	\caption{The pair of pants $Q_A$ with 
immersed curves $C_1$ and $C_2$.}
	\label{pants}
	\end{center}
\end{minipage}
\end{tabular}
\begin{center}
	\includegraphics[width=12.5cm]{reconstruct_abalone.eps}
	\caption{}
	\label{reconstruct_abalone}
\end{center}
\end{figure}

This section separates into two parts. 
In the former part we give the proofs of Theorem \ref{thm:A(m,n)} and \ref{thm:tildeA}, 
and we give the proof of Theorem \ref{thm:B(l,m,n)} in the latter part. 

\subsection{The case of $1$ true vertex}
\begin{lemma}
\label{lem:Kirby diag A(m,n)}
The 4-manifold $A(m,n)$ is represented by the Kirby diagram 
shown in Figure \ref{reconstruct_abalone} (c). 
\end{lemma}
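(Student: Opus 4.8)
The plan is to apply the interpretation machinery of Section~\ref{subsec:Interpretation from a Kirby diagram to a shadow} in reverse: rather than starting from a Kirby diagram, I start from the shadowed polyhedron $A$ with its prescribed gleams and reconstruct a Kirby diagram realizing the same 4-manifold. The key is to recognize the abalone as the polyhedron $P$ obtained from an explicit underlying almost-special polyhedron $Q_A$ (a pair of pants, shown in Figure~\ref{pants}) by attaching two disks $D_1, D_2$ along immersed curves $C_1, C_2$. I would first describe $A$ as $P_A$ (Figure~\ref{Nbd(abalone)}), identify the pair of pants $Q_A \subset S^3$ onto which the complement of a 1-handle's dotted circle collapses, and locate $C_1, C_2$ as the attaching curves of the two disks corresponding to the regions $e_1, e_2$.

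First I would set up the correspondence between the shadow data and the handle data. Since the abalone has one 1-handle's worth of topology encoded in the pair of pants $Q_A$ (giving a dotted unknot) and two disk regions $e_1, e_2$ that become 2-handles, the resulting diagram should have one dotted circle and two framed attaching circles, which I read off from the curves $C_1, C_2$ together with the over/under information consistent with how they sit in $S^3$. Next I would compute the framing coefficients. By Lemma~\ref{lem:Kirby to shadow}, the gleam of each disk region $\Int(D_j)$ equals $n_j - tw(C_j)$, so I must compute the twist numbers $tw(C_1)$ and $tw(C_2)$ of the framings $F_1, F_2$ with respect to $Q_A$, using Definition~\ref{def:framing with respect to Q. }. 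Because one of the regions of the abalone is attached via a \Mobius band rather than an annulus, the half-integer correction in the gleam condition is relevant, and the twist number there is computed as half the linking number of $\partial F_j$ together with a core. Solving $\gl(e_i) = n_i - tw(C_i)$ with $\gl(e_1)=m$ and $\gl(e_2)=n$ then yields the framings appearing in Figure~\ref{reconstruct_abalone}(c).

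The bookkeeping proceeds through the intermediate stages shown in Figure~\ref{reconstruct_abalone}(a)--(c): stage (a) records the pair of pants with the two immersed curves and their self-crossings and crossings with $Sing(Q_A)$; stage (b) translates these into a link in $S^3$ with a dotted circle; and stage (c) is the final Kirby diagram after resolving all the local gleam contributions from Figure~\ref{the contribution}. I would verify at each stage that the local contributions at self-crossings of $C$ and at crossings of $C$ with the triple line of $Q_A$ sum to the asserted gleams, so that the reconstructed 4-manifold is diffeomorphic to $A(m,n)$ by Lemma~\ref{lem:Kirby to shadow}.

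The main obstacle I expect is the careful tracking of the framing correction $tw(C_j)$ and the \Mobius-band contribution. The abalone cannot be drawn without the curves $C_1, C_2$ winding nontrivially against the singular set, so computing the linking numbers defining the twist numbers demands a precise choice of over/under information and a correct identification of which boundary components of the regular neighborhoods become annuli versus \Mobius bands. Getting the signs and the half-integer shift right — so that the computed framings match $m$ and $n$ exactly rather than up to an ambiguous additive constant — is the delicate point; the rest of the argument is a direct application of the reconstruction recipe already established.
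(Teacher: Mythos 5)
Your overall strategy coincides with the paper's: present $A$ as the pair of pants $Q_A$ with disks attached along the immersed curves $C_1,C_2$, embed $Q_A$ in the complement of the dotted circles of a Kirby diagram so that the attaching circles project onto $C_1,C_2$, and then use Lemma~\ref{lem:Kirby to shadow} together with the collapsing corrections to pin down the framings. However, two concrete steps in your write-up are wrong and would derail the argument. First, the handle count: you assert that the complement of \emph{one} dotted circle collapses onto $Q_A$, and that the resulting diagram has one dotted circle and two framed attaching circles. This is impossible: the complement of a single dotted circle is a solid torus, whose fundamental group is $\Z$, while $\pi_1(Q_A)$ is free of rank $2$; moreover a diagram with one 1-handle and two 2-handles has Euler characteristic $2$, so it cannot describe the contractible manifold $A(m,n)$. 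The paper instead takes \emph{two} dotted circles (the complement of a 2-component unlink is a genus-2 handlebody, which does collapse onto the pair of pants), obtaining a diagram with two 1-handles and two 2-handles, and only then cancels the left 1-handle against the 2-handle $L_1$ to reach the Mazur-type diagram of Figure~\ref{reconstruct_abalone}~(c), which has a single dotted circle and a single framed circle.

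Second, the \Mobius band issue you flag does not occur for the abalone. Since $Q_A$ is an orientable surface, both framings $F_1$ and $F_2$ with respect to $Q_A$ are annuli; the paper computes $tw(C_1)=0$ and $tw(C_2)=1$, and both gleams of $A$ are integers. The half-integer phenomenon belongs to $\tilde{A}$, where $\gl(\tilde{e}_2)=n-\frac{1}{2}$, and even there it enters through the collapsing correction rather than through a \Mobius band framing. Relatedly, your equation $\gl(e_i)=n_i-tw(C_i)$ identifies the gleam of the region $e_i$ of $A$ with that of the disk $D_i$; these differ a priori by the local contributions picked up when the boundary regions of $Q'_A$ are collapsed (in the paper: $m=M+(\frac{1}{2}-\frac{1}{2})$ and $n=(N-1)+(4\cdot\frac{1}{2}-4\cdot\frac{1}{2})$). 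Here those contributions happen to cancel, so the conclusion $M=m$, $N=n+1$ survives, but the cancellation has to be checked, not assumed. The net effect is that your framing bookkeeping, as stated, cannot be carried out on the diagram you propose; it requires the two-1-handle diagram and the subsequent 1-handle/2-handle cancellation.
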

\begin{proof}
Let $P_A$ be a small regular neighborhood of the singular set of $A$. 
It is easy to see that $P_A$ is an almost-special polyhedron shown in Figure \ref{Nbd(abalone)},
that is, $A$ is obtained from $P_A$ by attaching a disk along each of 
the boundary components $\partial (e_1\setminus P_A)$ and 
$\partial (e_2\setminus P_A)$ shown in the figure. 

Now let us consider the pair of pants $Q_A$ shown in Figure \ref{pants}, 
where $\partial _1, \partial _2$ and $\partial _3$ are 
its boundary components, 
and $C_1$ and $C_2$ are immersed curves on $Q_A$ 
equipped with over/under information at each double point. 
Let $Q'_A$ be the new almost-special polyhedron obtained from $Q_A$ 
by attaching two disks $D_1$ and $D_2$ to $C_1$ and $C_2$ along 
the boundaries respectively. 
Note that $Q'_A$ can not be embedded in $S ^3$. 
It is easy to check that the almost-special polyhedron obtained from $Q'_A$ by collapsing 
the three boundary regions corresponding to 
$\partial _1, \partial _2$ and $\partial _3$ is $A$. 
Note that $D_i\subset e_i$ for $i=1,2$ under this identification. 

We next consider the Kirby diagram shown in Figure \ref{reconstruct_abalone} (a). 
In the figure, $L_1$ and $L_2$ are attaching circles of 2-handles, 
whose framing coefficients are represented by 
$M$ and $N$ as in the figure respectively. 
We can see that $Q_A$ can be embedded in the Kirby diagram 
in Figure \ref{reconstruct_abalone} (a) such that 
$S^3\setminus (\text{dotted circles})$ collapses to $Q_A$ and 
$L_1$ and $L_2$ are projected onto $C_1$ and $C_2$ respectively. 
We can also see that the 4-manifold given by 
Figure \ref{reconstruct_abalone} (a) has a shadow $Q'_A$, and then also $A$. 

What is left is to compute the relation between $M, N$ and gleams 
$m=\gl (e_1), n=\gl (e_2)$. 
Let $F_i$ ($i=1,2$) be the framing with respect to $Q_A$ of $C_i$. 
Both $F_1$ and $F_2$ are annuli and 
their twist numbers are $tw(C_1) = 0$ and $tw(C_2) = 1$. 
Therefore, $\gl (D_1) = M - 0$, $\gl (D_2) = N - 1$ 
by Lemma \ref{lem:Kirby to shadow}. 
Considering the changes of the gleams by collapsing of the boundary regions, 
we get 
\begin{align*}
m &= \gl (e_1) = M + (\frac{1}{2}-\frac{1}{2}), \\
n &= \gl (e_2) = (N -1) + (4\cdot \frac{1}{2} - 4\cdot \frac{1}{2}). 
\end{align*}
Hence $M=m$ and $N = n+1$. 
Thus we get the Kirby diagram of $A(m,n)$ in Figure \ref{reconstruct_abalone} (a). 
Since the pair of the left 1-handle and the 2-handle corresponding to $L_1$
is a canceling pair, 
we can erase this pair and get Figure \ref{reconstruct_abalone} (b). 
By isotopy, we get the Kirby diagram of $A(m,n)$ as in 
Figure \ref{reconstruct_abalone} (c). 
\end{proof}

\begin{figure}
\begin{tabular}{ccc}
\begin{minipage}{0.6\hsize}
	\begin{center}
	\includegraphics[width=7.6cm]{lem.eps}
	\caption{}
	\label{lem}
	\end{center}
\end{minipage}
\begin{minipage}{0.15\hsize}
\hspace{1ex}
\end{minipage}
\begin{minipage}{0.20\hsize}
\vspace{1.8ex}
	\begin{center}
	\includegraphics[width=2cm]{lem2.eps}
	\caption{}
	\label{lem2}
	\end{center}
\end{minipage}
\end{tabular}
\end{figure}

\begin{lemma}
\label{lem:Kirby}
There exists a sequence of Kirby moves from the left to the right in Figure \ref{lem} for any $m$,
where the tangle $d$ satisfies the following properties:
\begin{enumerate}
\item
the linking number of the link shown in Figure \ref{lem2} is $|m|$ and
\item
the link component with framing coefficient $\varepsilon $ 
in Figure \ref{lem} is an unknot, 
\end{enumerate}
where $\varepsilon = -1$ if $m>0$ and $\varepsilon = +1$ if $m<0$.
\end{lemma}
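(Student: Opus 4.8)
The plan is to argue by induction on $|m|$, treating $m>0$ and $m<0$ as mirror images of one another and exhibiting an explicit sequence of Kirby moves while keeping careful track of the tangle $d$ at each stage. For the base cases $m=\pm 1$ I would simplify the left-hand diagram of Figure~\ref{lem} directly, checking that $d$ then contains a single crossing, so that the link of Figure~\ref{lem2} has linking number $1=|m|$, and that the distinguished component is a standard round unknot with framing $\varepsilon=-1$ when $m=1$ and $\varepsilon=+1$ when $m=-1$. This base computation also pins down the orientation and sign conventions for the remainder of the argument and, in particular, confirms the rule $\varepsilon=-\mathrm{sign}(m)$.

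For the inductive step, passing from $|m|$ to $|m|+1$ within a fixed sign class, I would insert a single blow-up of an auxiliary $\varepsilon$-framed unknot encircling the appropriate pair of strands, perform the handle slides that thread one further strand through the distinguished component, and then blow this auxiliary unknot back down so that a full twist of the prescribed sign is absorbed into the tangle box. The net effect is to add exactly one crossing of the correct sign to $d$, raising the linking number of the link in Figure~\ref{lem2} by one in absolute value, while leaving the number of components, the framing $\varepsilon$, and the unknottedness of the distinguished component unchanged. Property~(2) is then immediate by construction, since I would keep the $\varepsilon$-framed component fixed as a standard unknot throughout and realize every handle slide by moving the \emph{other} strands over it; property~(1) follows by reading off the signed crossings of the resulting $d$, which are all of the same sign and number exactly $|m|$.

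The step I expect to be the main obstacle is the bookkeeping in this inductive move: one must verify that the blow-up, the slides, and the blow-down can be arranged so as to modify $d$ in precisely the controlled way claimed, without disturbing the rest of the diagram or altering the framing of the distinguished component. Checking this consistency simultaneously for both sign classes $m>0$ and $m<0$, and confirming that the accumulated twists land inside the tangle box $d$ rather than where they would knot or re-frame the distinguished component, is the delicate part of the argument.
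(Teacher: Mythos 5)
Your overall skeleton --- induction on $|m|$ with base case $m=\pm 1$, treating the two signs as mirror images, and carrying properties (1) and (2) as the inductive invariant --- is the same as the paper's. But the inductive step, which is the entire content of the lemma, is not actually executed: you describe a blow-up/slide/blow-down scheme and then concede that verifying it is ``the delicate part of the argument.'' Worse, what you do assert about that step is internally inconsistent. You claim the move adds ``exactly one crossing of the correct sign to $d$, raising the linking number of the link in Figure~\ref{lem2} by one.'' That cannot happen: the components of the link in Figure~\ref{lem2} are closed curves, so the number of crossings between distinct components is even in any diagram; a single new crossing is therefore either impossible (if between the two components) or irrelevant to the linking number (if a self-crossing). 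The correct count, which is exactly what the paper's proof establishes, is that each step creates \emph{two} new positive (resp.\ negative) crossings lying outside the old tangle $d$, and these two together raise the linking number by one. Your simultaneous statement that ``a full twist \dots is absorbed into the tangle box'' (a full twist on two strands being two crossings) contradicts your one-crossing claim and shows the bookkeeping was never done.

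The auxiliary blow-up/blow-down pair is also an unnecessary detour, and it is here that your route genuinely diverges from the paper's. The $\varepsilon$-framed unknot is already present in the diagrams of Figure~\ref{lem} (in the paper's applications it is produced by a single blow-up \emph{before} the lemma is invoked), and the paper's inductive step consists of one handle slide of the other 2-handle (the one with coefficient $n'$) over this existing $\varepsilon$-framed component, followed by an isotopy. Since the $\varepsilon$-component itself is never moved, it remains an unknot, giving (2); and condition (2) for $m$ is precisely what guarantees that the slid strand, which afterwards runs parallel to the $\varepsilon$-strand inside $d$, creates no new crossings \emph{inside} $d$ --- so the only new crossings are the two outside, giving (1) for $m+1$. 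This use of (2) to control the crossing count inside the tangle is the key mechanism of the induction and is absent from your outline. Finally, note that a blow-up/blow-down pair with slides in between only guarantees that the boundary 3-manifold is unchanged (the 4-manifolds agree only after a connected sum with $\pm\C P^2$); that would suffice for the paper's Casson-invariant applications, but it is weaker than what the slide-and-isotopy proof delivers.
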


\begin{figure}
	\begin{center}
	\includegraphics[width=10cm]{lem3.eps}
	\vspace{-1em}
	\caption{}
	\label{lem3}
	\end{center}
	\begin{center}
	\includegraphics[width=10cm]{lem4.eps}
	\vspace{-1em}
	\caption{}
	\label{lem4}
	\end{center}
\end{figure}
\begin{proof}
We first prove for the case $m>0$ by induction on $m$. 
It is easy for $m=1$. 

See Figure \ref{lem3}. 
We assume that 
this lemma holds 
for $m$ and prove it for $m+1$. 
We slide the 2-handle and get Figure \ref{lem3} (b). 
The isotopy gives Figure \ref{lem3} (c). 
Since the link component with framing coefficient $\varepsilon $ is not moved, 
it is still an unknot. 
Let $d'$ be the tangle shown in Figure \ref{lem3} (c). 
In Figure \ref{lem3} (b) and (c), the link component with framing coefficient $n'$ 
intersects the vertical sides of the tangle $d$ 
and is running parallel to the other strand
with framing coefficient $\varepsilon$ inside of $d$. 
Note that these two parallel strands in $d$ have
no crossing because of the condition (2). 
Hence there is no change of crossing number in $d$. 
On the other hand, out of $d$ in $d'$, we can see two positive crossings. 
Then computing the linking number of the link 
obtained by setting $d'$ instead of the tangle $d$ in Figure \ref{lem2}, 
we get $m+1$.

The proof for the case $m<0$ is similar. 
See Figure \ref{lem4}. 
\end{proof}


\begin{proof}[Proof of Theorem \ref{thm:A(m,n)}]

The 4-manifold $A(m,n)$ is contractible since $A$ is contractible.
By the Kirby diagram in Figure \ref{reconstruct_abalone} (c), 
$A(m,n)$ satisfies the condition of the handle decomposition 
for a Mazur manifold. 
We will compute the Casson invariant of the boundary of $A(m,n)$ to 
verify whether $A(m,n)$ is Mazur type. 

\begin{figure}
\begin{center}
	\includegraphics[width=8cm]{surg_diag_A.eps}
	\caption{}
	\label{surg_diag_A(m,n)}
\end{center}
\end{figure}

Let us blow-up and apply Lemma \ref{lem:Kirby} to the diagram shown in 
Figure \ref{surg_diag_A(m,n)} (a), and we get Figure \ref{surg_diag_A(m,n)} (b). 
Sliding the 2-handle with framing coefficient $\varepsilon$ another 2 times, 
we get Figure \ref{surg_diag_A(m,n)} (c). 
We erase the canceling pair and get Figure \ref{surg_diag_A(m,n)} (d). 
Now we focus on the boundary of $A(m,n)$ and regard
Figure \ref{lem:Kirby} (d) as a surgery diagram of $\partial A(m,n)$.
Note that the two strands in the tangle $d$, 
intersecting two horizontal sides of $d$, are parallel. 
Moreover, the strand in the tangle $d$, 
intersecting two vertical sides of $d$ has no self-intersection. 
Hence the knot in Figure \ref{surg_diag_A(m,n)} (d) is a ribbon knot. 
We denote this knot by $K_{(m,n)}$. 
Next we compute the Alexander polynomial of $K_{(m,n)}$ by using the method 
in \cite{Ter59}, in which Terasaka computed the Alexander polynomials of ribbon knots. 
We get 
\begin{align*}
\mathit{\Delta}_{K_{(m,n)}}(t) 
= t^{|m|+1}-t^{|m|}-t+3-t^{-1}-t^{-|m|}+t^{-|m|-1} . 
\end{align*}
Then the Casson invariant of $\partial A(m,n)$ 
can be computed by using the surgery formula for the Casson invariant as follows:
\begin{align*}
\lambda (\partial A(m,n)) 
&= \lambda (S^3 + \frac{1}{\varepsilon}\cdot K_{(m,n)})\\
&= \lambda (S^3) + \frac{\varepsilon}{2}\mathit{\Delta}_{K_{(m,n)}}''(1)\\
&= 0 + \frac{\varepsilon}{2}\cdot 4|m|\\
&= -2m.
\end{align*}
Hence $\partial A(m,n)$ is not homeomorphic to $S^3$ for $m\ne 0$, and
$A(m,n)$'s are mutually not homeomorphic for different $m$. 
\end{proof}

Next we study the compact oriented 4-manifold $\tilde{A}(m,n-\frac{1}{2})$ 
whose shadow is $\tilde{A}$. 
We first describe $\tilde{A}(m,n-\frac{1}{2})$ by the Kirby diagram to prove 
Theorem \ref{thm:tildeA}. 

\begin{lemma}
\label{lem:Kirby diag tildeA}
The 4-manifold $\tilde{A}(m,n-\frac{1}{2})$ is represented by the Kirby diagram
shown in Figure \ref{reconstruct_tildeA} (c). 
\end{lemma}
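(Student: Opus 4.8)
The plan is to mimic the proof of Lemma~\ref{lem:Kirby diag A(m,n)} almost line by line, since $\tilde{A}$ differs from the abalone $A$ only in how the two disk regions are glued along the singular set. First I would let $P_{\tilde{A}}$ be a small regular neighborhood of the singular set of $\tilde{A}$ and exhibit it as an almost-special polyhedron, so that $\tilde{A}$ is recovered from $P_{\tilde{A}}$ by attaching a disk along each of the two boundary curves $\partial(\tilde{e}_1\setminus P_{\tilde{A}})$ and $\partial(\tilde{e}_2\setminus P_{\tilde{A}})$. The essential difference from the abalone is that $\tilde{A}$ cannot be embedded in $\R^3$; this must be reflected in the way the regular neighborhood of the singular set sits, and in particular one of the framings produced below will be a \Mobius band rather than an annulus, which is exactly what forces the half-integer gleam $n-\frac{1}{2}$.

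Next I would produce the underlying surface-with-curves picture. As in the abalone case, I would take a suitable almost-special polyhedron $Q_{\tilde{A}}$ (a pair of pants, or a once-punctured surface playing the analogous role) carrying two immersed curves $\tilde{C}_1,\tilde{C}_2$ with over/under data, attach disks $\tilde{D}_1,\tilde{D}_2$ along them to form $Q'_{\tilde{A}}$, and check that collapsing the boundary regions yields $\tilde{A}$ with $\tilde{D}_i\subset \tilde{e}_i$. I would then display the Kirby diagram in Figure~\ref{reconstruct_tildeA}~(a), with attaching circles $L_1,L_2$ of framing coefficients $M,N$, and verify that $Q_{\tilde{A}}$ embeds in the complement of the dotted circles with $L_1,L_2$ projecting to $\tilde{C}_1,\tilde{C}_2$. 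By Lemma~\ref{lem:Kirby to shadow}, the reconstructed 4-manifold of this diagram has shadow $Q'_{\tilde{A}}$, hence $\tilde{A}$.

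The main computation is then the relation between $M,N$ and the gleams $m=\gl(\tilde{e}_1)$, $n-\frac{1}{2}=\gl(\tilde{e}_2)$. I would compute the framings $\tilde{F}_i$ with respect to $Q_{\tilde{A}}$ of $\tilde{C}_i$ and their twist numbers $tw(\tilde{C}_i)$ using Definition~\ref{def:framing with respect to Q. }, giving $\gl(\tilde{D}_i)=M-tw(\tilde{C}_1)$ and $N-tw(\tilde{C}_2)$; then I would track the local contributions to the gleams from the self-crossings of $\tilde{C}$ and the crossings with $Sing(Q_{\tilde{A}})$ (Figure~\ref{the contribution}) and the shifts incurred when the boundary regions collapse and get identified. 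Solving this system recovers $M$ and $N$ as explicit functions of $m$ and $n$, as in the abalone argument. Finally I would cancel the left 1-handle against the 2-handle attached along $L_1$ and simplify by isotopy to arrive at Figure~\ref{reconstruct_tildeA}~(c).

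The step I expect to be the main obstacle is the gleam bookkeeping in the non-embeddable case: because $\tilde{A}$ does not embed in $\R^3$, one of the two framings $\tilde{F}_i$ is a \Mobius band, so its twist number is computed as a half linking number and the constraint $\gl(R)-\tfrac{1}{2}N\in\Z$ must be respected. Getting the signs and the half-integer shift right here — and confirming that the collapse of the boundary regions produces exactly the offset yielding $n-\frac{1}{2}$ rather than an integer — is the delicate part. Once that arithmetic is pinned down, the handle cancellation and isotopy to Figure~\ref{reconstruct_tildeA}~(c) are routine.
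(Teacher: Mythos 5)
Your skeleton coincides with the paper's proof: take a regular neighborhood $P_{\tilde{A}}$ of $Sing(\tilde{A})$ and recover $\tilde{A}$ by attaching two disks; find a surface $Q_{\tilde{A}}$ carrying immersed curves $\tilde{C}_1,\tilde{C}_2$ so that attaching disks and collapsing the boundary region yields $\tilde{A}$; embed $Q_{\tilde{A}}$ in the complement of the dotted circles of the diagram in Figure \ref{reconstruct_tildeA} (a); compute gleams via Lemma \ref{lem:Kirby to shadow}; cancel a 1-handle/2-handle pair and isotope to Figure \ref{reconstruct_tildeA} (c). (For the record, the surface the paper uses is the once-punctured torus of Figure \ref{punctured_torus}, not a pair of pants, but you hedged on this and it is not the real issue.)

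The real issue is the mechanism you predict for the half-integer gleam, which is precisely the step you single out as the crux, and it is wrong. The surface $Q_{\tilde{A}}$ is orientable and embedded in $S^3\setminus(\text{dotted circles})$, so a regular neighborhood in $Q_{\tilde{A}}$ of any immersed closed curve is an annulus: neither $\tilde{F}_1$ nor $\tilde{F}_2$ can be a \Mobius band. Indeed, the paper computes that both framings are annuli with \emph{integer} twist numbers $tw(\tilde{C}_1)=0$ and $tw(\tilde{C}_2)=1$, so $\gl(\tilde{D}_1)=M$ and $\gl(\tilde{D}_2)=N-1$ are integers. The $-\tfrac{1}{2}$ enters only at the collapsing step: when the boundary region of $Q'_{\tilde{A}}$ is collapsed, the portions of $Q_{\tilde{A}}$ absorbed into $\tilde{e}_2$ carry local crossing contributions (Figure \ref{the contribution}) that sum to $-\tfrac{1}{2}$ instead of cancelling in pairs as they do in the abalone computation; this gives $m = M$ and $n-\tfrac{1}{2} = (N-1)-\tfrac{1}{2}$, hence $N = n+1$. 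The \Mobius band responsible for $\gl(\tilde{e}_2)\in\Z+\tfrac{1}{2}$ (and for the non-embeddability of $\tilde{A}$ in $\R^3$) lives inside $\tilde{A}$ itself, around the boundary curve of the closure of $\tilde{e}_2$; it is created by the identifications during the collapse and is not visible as a framing on $Q_{\tilde{A}}$. Executed literally, your plan would either have you searching in vain for a \Mobius framing, or --- worse --- have you insert both a half-integer twist number and the collapse offset, double-counting the shift and misidentifying $N$.
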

\begin{proof}
The proof is similar to Lemma \ref{lem:Kirby diag A(m,n)}. 
Let $P_{\tilde{A}}$ be a small regular neighborhood of the singular set 
of $\tilde{A}$ shown in Figure \ref{Nbd(tildeA)}. 
It is easy to check that $\tilde{A}$ is homeomorphic to 
$P_{\tilde{A}}$ attached two disks along the boundary components 
$\partial (\tilde{e}_1\setminus P_{\tilde{A}})$ and 
$\partial (\tilde{e}_2\setminus P_{\tilde{A}})$ 
shown in the figure. 

Now let us consider the torus $Q_{\tilde{A}}$ with one boundary component 
as shown in Figure \ref{punctured_torus}, 
where $\tilde{\partial}$ is its boundary component 
and $\tilde{C}_1$ and $\tilde{C}_2$ are the immersed curves on $Q_{\tilde{A}}$ 
equipped with over/under information at each double point. 

Let $Q'_{\tilde{A}}$ be the almost-special polyhedron obtained from $Q_{\tilde{A}}$ 
by attaching two disks $\tilde{D}_1$ and $\tilde{D}_2$ 
to $\tilde{C}_1$ and $\tilde{C}_2$ along the boundaries respectively. 
It is easy to check that the almost-special polyhedron obtained from $Q'_{\tilde{A}}$ by collapsing 
the boundary region is $\tilde{A}$. 
Note that $\tilde{D}_i\subset \tilde{e}_i$ for $i=1,2$ under this identification. 

Next we consider the Kirby diagram shown in Figure \ref{reconstruct_tildeA} (a). 
In the figure, $\tilde{L}_1$ and $\tilde{L}_2$ are attaching circles of 2-handles, 
whose framing coefficients are represented by 
$M$ and $N$ as in the figure respectively. 
We can see that $Q_{\tilde{A}}$ can be embedded in the Kirby diagram 
in Figure \ref{reconstruct_tildeA} (a) such that 
$S^3\setminus (\text{dotted circles})$ collapses to $Q_{\tilde{A}}$ and 
$\tilde{L}_1$ and $\tilde{L}_2$ are projected onto 
$\tilde{C}_1$ and $\tilde{C}_2$ respectively. 
We can also see that the 4-manifold given by 
Figure \ref{reconstruct_tildeA} (a) has a shadow $Q'_{\tilde{A}}$, 
and then also $\tilde{A}$. 

Let $\tilde{F}_i$ ($i=1,2$) be the framing with respect to $Q_{\tilde{A}}$ of $\tilde{C}_i$. 
Both $\tilde{F}_1$ and $\tilde{F}_2$ are annuli, 
and we get the twist numbers $tw(\tilde{C}_1) = 0$ and $tw(\tilde{C}_2) = 1$. 
Therefore, $\gl (\tilde{D}_1) = M - 0$, $\gl (\tilde{D}_2) = N - 1$ 
by Lemma \ref{lem:Kirby to shadow}. 
Considering the changes of the gleams by collapsing of the boundary regions, 
we get 
\begin{align*}
m &= \gl (\tilde{e}_1) = M + 0, \\
n-\frac{1}{2} &= \gl (\tilde{e}_2) = (N - 1) - \frac{1}{2}. \\
\end{align*}
Hence $M=m$ and $N = n+1$. 
Thus we get the Kirby diagram of $\tilde{A}(m,n-\frac{1}{2})$. 
We erase the canceling pair and get Figure \ref{reconstruct_tildeA} (b). 
By isotopy, we get Figure \ref{reconstruct_tildeA} (c). 
\end{proof}
\begin{figure}
\begin{tabular}{ccc}
\begin{minipage}{0.41\hsize}
	\begin{center}
	\includegraphics[height=3.5cm]{sing_tildeA.eps}
	\caption{The almost-special polyhedron $P_{\tilde{A}}$.}
	\label{Nbd(tildeA)}
	\end{center}
\end{minipage}
\begin{minipage}{0.05\hsize}
\hspace{1em}
\end{minipage}
\begin{minipage}{0.42\hsize}
\vspace*{3ex}
	\begin{center}
	\includegraphics[height=3.5cm]{punctured_torus.eps}
	\caption{The torus $Q_{\tilde{A}}$ with one boundary component 
and immersed curves $\tilde{C}_1$ and $\tilde{C}_2$.}
	\label{punctured_torus}
	\end{center}
\end{minipage}
\end{tabular}
\begin{center}
	\includegraphics[width=1\hsize]{reconstruct_tildeA.eps}
	\caption{}
	\label{reconstruct_tildeA}
\end{center}
\end{figure}

\begin{remark}
\label{rem:A=tildeA}
It is easy to check that 
$A(\pm 1,n)\cong \tilde{A}(\mp 1,n\pm \frac{7}{2})$ by isotopies in their Kirby diagrams. 
We don't know whether or not there are other diffeomorphisms between 
$A(m,n)$ and $\tilde{A}(m',n'-\frac{1}{2})$. 
\end{remark}

We now define 
\begin{align*}
\tilde{f}_m:\partial \tilde{A}(m,-\frac{3}{2})\to \partial \tilde{A}(m,-\frac{3}{2})
\end{align*}
by the involution obtained by first surgering $S^1 \times B^3$ to $B^2 \times S^2$ 
in the interior of $\tilde{A}(m,-\frac{3}{2})$, 
then surgering the other embedded $B^2 \times S^2$ back to $S^1 \times B^3$. 
We notice the Kirby diagram of $\tilde{A}(m,n-\frac{1}{2})$ in Figure \ref{reconstruct_tildeA} (c) 
is a symmetric link, and the involution can be done by replacing the dot and g$0$h in the diagram. 

\begin{figure}
\begin{center}
	\includegraphics[width=8cm]{surg_diag_tildeA.eps}
	\caption{}
	\label{surg_diag_tildeA}
\end{center}
\begin{center}
	\includegraphics[width=6cm]{Legendred_tildeA.eps}
	\caption{}
	\label{Legendred_tildeA}
\end{center}
\end{figure}

\begin{proof}[Proof of Theorem \ref{thm:tildeA}]
\noindent (1)
The proof is similar to Theorem \ref{thm:A(m,n)}. 
First the 4-manifold $\tilde{A}(m,n-\frac{1}{2})$ is 
contractible since $\tilde{A}$ is contractible. 
By the Kirby diagram, 
$\tilde{A}(m,n-\frac{1}{2})$ has a handle decomposition satisfying the 
condition for a Mazur manifold. 

We next compute the Casson invariant of the boundary of $\tilde{A}(m,n-\frac{1}{2})$. 
Let us blow-up and apply Lemma \ref{lem:Kirby} to the diagram as shown in 
Figure \ref{surg_diag_tildeA} (a), and we get Figure \ref{surg_diag_tildeA} (b). 
We slide the 2-handle with framing coefficient $\varepsilon$ another 2 times, 
and we get Figure \ref{surg_diag_tildeA} (c). 
We erase the canceling pair and get Figure \ref{surg_diag_tildeA} (d). 
We notice the knot in Figure \ref{surg_diag_tildeA} (d) is a ribbon knot. 
We denote this knot by $\tilde{K}_{(m,n)}$. 
By using the method in \cite{Ter59} again, we get 
\begin{align*}
\mathit{\Delta}_{\tilde{K}_{(m,n)}}(t) 
= -t^{|m|}+t^{|m|-1}-t+3-t^{-1}+t^{-|m|+1}-t^{-|m|} . 
\end{align*}
Then the Casson invariant of $\partial \tilde{A}(m,n-\frac{1}{2})$ 
can be computed by using the surgery formula for the Casson invariant as follows:
\begin{align*}
\lambda (\partial \tilde{A}(m,n-\frac{1}{2})) 
&= \lambda (S^3 + \frac{1}{\varepsilon}\cdot \tilde{K}_{(m,n)})\\
&= \lambda (S^3) + \frac{\varepsilon}{2}\mathit{\Delta}_{\tilde{K}_{(m,n)}}''(1)\\
&= 0 - \frac{\varepsilon}{2}\cdot 4|m|\\
&= 2m.
\end{align*}
Hence $\partial \tilde{A}(m,n-\frac{1}{2})$ is not homeomorphic to $S^3$ for 
$m\ne 0$, and 
$\tilde{A}(m,n-\frac{1}{2})$'s are mutually not homeomorphic for different $m$. 

\begin{figure}
\begin{center}
	\includegraphics[height=3.1cm]{tildeA1andtildeA2.eps}
	\caption{}
	\label{tildeA1andtildeA2}
\end{center}
\end{figure}

\vspace{1ex}
\noindent (2)
Set $n=-1$ and $m<0$. 
First we verify that $\tilde{A}(m,-\frac{3}{2})$ is a Stein domain. 
Deform the attaching circle of the 2-handle in the Kirby diagram of $\tilde{A}(m,-\frac{3}{2})$
obtained in Lemma \ref{lem:Kirby diag tildeA} to a Legendrian knot with respect to 
the canonical contact structure on $S^1\times S^2$ as shown in
Figure \ref{Legendred_tildeA} (in which we denote the 1-handle by 3-balls instead 
of dotted circle notation). Then we get its Thurston-Bennequin number as 
\[
tb = wr - \sharp \{ \text{left cusps}\} 
= (2|m| +1)- (2|m|-1)
=2. 
\]
Thus $\tilde{A}(m,-\frac{3}{2})$ is a Stein domain 
since the framing coefficient $0$ of the 2-handle is less than $tb=2$
(Eliashberg criterion \cite{Eli90,Gom98}). 

Next we prove that $(\tilde{A}(m,-\frac{3}{2}),\tilde{f}_m)$ is a cork. 
The basic idea of the proof is in \cite[Theorem 2.5]{AY08}. 

The involution $\tilde{f}_m$ extends to a self-homeomorphism of $\tilde{A}(m,-\frac{3}{2})$ 
since the boundary of $\tilde{A}(m,-\frac{3}{2})$ is a homology sphere 
and $B(0,m,n)$ is contractible 
(see \cite{Boy86} for a general discussion).

Let $\tilde{A}_1(m)$ and $\tilde{A}_2(m)$ be compact oriented 4-manifolds 
with boundary as shown in Figure \ref{tildeA1andtildeA2}. 
Note that 
$\tilde{A}_1(m)$ and $\tilde{A}_2(m)$ are obtained from $\tilde{A}(m,-\frac{3}{2})$ 
by attaching a 2-handle and $\tilde{A}_2(m)$ is diffeomorphic to the 4-manifold 
obtained from $\tilde{A}_1(m)$ by removing the interior of 
$\tilde{A}(m,-\frac{3}{2})$ and gluing it via $\tilde{f}_m$. 
Since $\tilde{f}_m$ extends to a self-homeomorphism of $\tilde{A}(m,-\frac{3}{2})$, 
$\tilde{A}_1(m)$ and $\tilde{A}_2(m)$ are homeomorphic. 
Now it is easy to check that $\tilde{A}_1(m)$ is a Stein domain 
(cf. Figure \ref{Legendred_tildeA}). 
On the other hand $\tilde{A}_2(m)$ is not a Stein domain since it has a 
2-sphere with self-intersection number $-1$ (cf. \cite{LM98}). 
Hence $\tilde{A}_1(m)$ and $\tilde{A}_2(m)$ are not diffeomorphic, that is, 
$\tilde{f}_m$ can not extend to a self-diffeomorphism of $\tilde{A}_1(m,-\frac{3}{2})$. 
\end{proof}

We recall that the \textit{shadow complexity} of a 4-manifold $M$ 
is the minimal number of true vertices among all shadows of $M$. 
If $M$ has 3- or 4-handles, 
its shadow complexity is defined by the minimal number of 
shadow complexities of 4-manifolds to which we can attach 3- and 4-handles 
such that the resulting manifold are diffeomorphic to $M$. 

\begin{proof}[Proof of Corollary \ref{cor:shadow complexity}]
By \cite{Ike71}, there are no acyclic special polyhedra without true vertices, 
and acyclic special polyhedra with only one ture vertex are just $A$ and $\tilde{A}$. 
A cork is defined as a Stein domain, whose handle decomposition 
has no 3-handles or 4-handles \cite{Eli90}. 
Thus there are no corks with shadow complexity $1$ other than 
$A(m,n)$ or $\tilde{A}(m,n-\frac{1}{2})$. 
\end{proof}

Now we compare our results and previous works. 
\begin{corollary}
\label{cor:W(l,k)}
For any integers $l$ and $k$, 
$W^{\pm}(l,k) \cong \tilde{A}(\pm 1, l+k-\frac{3}{2})$ holds.
\end{corollary}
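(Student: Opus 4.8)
The plan is to prove Corollary~\ref{cor:W(l,k)} by exhibiting an explicit diffeomorphism through Kirby calculus, since both $W^{\pm}(l,k)$ and $\tilde{A}(\pm 1, l+k-\frac{3}{2})$ are presented by concrete handle diagrams. By Lemma~\ref{lem:Kirby diag tildeA}, the manifold $\tilde{A}(\pm 1, l+k-\frac{3}{2})$ is given by the Kirby diagram in Figure~\ref{reconstruct_tildeA}~(c), specialized to $m=\pm 1$ and $n-\frac{1}{2}=l+k-\frac{3}{2}$ (so $n=l+k-1$). On the other side, $W^{\pm}(l,k)$ is the Mazur manifold of Figure~\ref{W(l,k)}, consisting of a single $0$-handle, a single $1$-handle and a single $2$-handle. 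First I would write down both diagrams side by side so that the two parameters $l$ and $k$ appear as explicit numbers of full twists and clasps in each picture.

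The key step is to transform one diagram into the other by a sequence of handle slides and isotopies. Since $m=\pm 1$ makes the tangle $d$ from Lemma~\ref{lem:Kirby} especially simple (the linking number in Figure~\ref{lem2} is just $|m|=1$), the attaching circle of the $2$-handle in $\tilde{A}(\pm 1, l+k-\frac{3}{2})$ should simplify dramatically; I expect the band that records the gleam to contribute exactly the twisting that matches the parameters of $W^{\pm}(l,k)$. Concretely, I would slide the $2$-handle over the $1$-handle and perform isotopies to collect the twisting and clasping into the standard Mazur form, matching the sign convention $\pm$ with the two polyhedra $A$ and $\tilde{A}$. The bookkeeping that the combined framing and linking data agree---i.e., that the gleam $l+k-\frac{3}{2}$ on $\tilde{e}_2$ translates into precisely the $(l,k)$ parameters of $W^{\pm}$---is the crux.

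The main obstacle will be organizing the Kirby moves so that the two integer parameters $l$ and $k$ of $W^{\pm}(l,k)$, which enter Figure~\ref{W(l,k)} in two geometrically distinct ways, both emerge correctly from the single gleam sum $l+k$ together with the structure of $\tilde{A}$. Because the correspondence is only through the sum $l+k$, I anticipate that the isotopy will naturally absorb one parameter into twists of the $2$-handle and the other into the clasp pattern, and verifying that these two contributions reassemble into the Mazur picture requires care with signs and crossing changes. In light of Remark~\ref{rem:A=tildeA}, which already records $A(\pm 1,n)\cong \tilde{A}(\mp 1, n\pm\frac{7}{2})$ by diagram isotopies, I would also cross-check the answer against that known identification to make sure the conventions are consistent. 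Once the diagram of $\tilde{A}(\pm 1, l+k-\frac{3}{2})$ is reduced to the standard form of $W^{\pm}(l,k)$, the diffeomorphism follows immediately, completing the proof.
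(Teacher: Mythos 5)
Your overall strategy (write down the two Kirby diagrams and connect them by handle slides and isotopies) is the right one, and for the case $l=0$ it is exactly what the paper does: setting $m=\pm 1$ and $n=k-1$ in the diagram of Figure~\ref{reconstruct_tildeA}~(c) and comparing with Figure~\ref{W(l,k)} gives $W^{\pm}(0,k)\cong \tilde{A}(\pm 1,k-\frac{3}{2})$ essentially by inspection. But for general $(l,k)$ your plan has a genuine gap, and you have in fact put your finger on it without resolving it. Since $\tilde{A}(\pm 1, l+k-\frac{3}{2})$ depends only on the sum $l+k$, proving the corollary for all $(l,k)$ is logically equivalent to proving that $W^{\pm}(l,k)$ depends only on $l+k$, i.e.\ the relation $W^{\pm}(l,k)\cong W^{\pm}(l+1,k-1)$. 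This is a theorem of Akbulut and Kirby \cite{AK79}, established by a specific handle slide that trades a full twist for a clasp; it is not bookkeeping that an isotopy will ``naturally absorb.'' Indeed, no sequence of moves starting from a single diagram determined by $l+k$ can spontaneously produce two independent parameters: the content is precisely a nontrivial equivalence between the different $(l,k)$-pictures, and your proposal neither cites this result nor exhibits the slide that proves it, so as written the crux is assumed rather than proved. (A minor side remark: Lemma~\ref{lem:Kirby} plays no role here --- in the paper it is used for the surgery description of the boundary in the Casson invariant computation, not for identifying the 4-manifold.)

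The repair is small and is the paper's route: first quote (or reprove by the handle slide) the Akbulut--Kirby relation to get $W^{\pm}(l,k)\cong W^{\pm}(0,l+k)$, and only then do your direct diagram comparison, matching Figure~\ref{reconstruct_tildeA}~(c) with $m=\pm 1$, $n=l+k-1$ against Figure~\ref{W(l,k)} with parameters $(0,l+k)$. Note also that this fixes the direction of the argument: one normalizes the two-parameter family $W^{\pm}(l,k)$ down to a one-parameter normal form, rather than trying to expand the one-parameter diagram of $\tilde{A}(\pm 1, l+k-\frac{3}{2})$ into a two-parameter picture.
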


\begin{proof}
Set $m=\pm 1$ and $n=k-1$ in the Kirby diagram of $\tilde{A}(m, n-\frac{1}{2})$
shown in Figure \ref{reconstruct_tildeA} (c), 
and we get $W^{\pm}(0,k) \cong \tilde{A}(\pm 1, k-\frac{3}{2})$. 
Akbulut and Kirby showed $W^{\pm} (l,k)\cong W^{\pm} (l+1,k-1)$ in \cite{AK79}, 
and then $W^{\pm} (l,k)\cong W^{\pm} (0,l+k) \cong \tilde{A}(\pm 1, l+k-\frac{3}{2})$. 
\end{proof}

\begin{remark}
If $(P,\gl )$ is a shadow of a 4-manifold $M$, 
then the 4-manifold constructed from the pair $(P,-\gl )$ is diffeomorphic to $-M$, 
where $-\gl$ is the gleam $\gl$ with the opposite sign. 
We apply this as follows: 
\begin{align*}
W^{-}(l,k)
\cong &\tilde{A}(-1, l+k-\frac{3}{2})\\
\cong &-\tilde{A}(1, -l-k+\frac{3}{2})\\
= &-\tilde{A}(1, (-l+2)+(-k+1)-\frac{3}{2})\\
\cong &-W^{+}(-l+2,-k+1). 
\end{align*}
Hence $W^{-}(l,k)\cong -W^{+}(-l+2,-k+1)$. 
Note that this assertion has been proven by 
Akbulut and Kirby in \cite{AK79}. 
Their proof is done by Kirby calculus. 
\end{remark}

The following is a corollary of Remark \ref{rem:A=tildeA}. 
\begin{corollary}
\label{cor:W(l,k)'}
For any integers $l$ and $k$, 
$W^{-}(l,k)\cong A(1,l+k-5)$ and $W^{+}(l,k)\cong A(-1,l+k+2)$ hold. 
\end{corollary}

\subsection{The case of $2$ true vertices}

Next we study the compact oriented 4-manifold $B(l,m,n)$ 
whose shadow is Bing's house $B$. 
We first describe $B(l,m,n)$ by the Kirby diagram and then prove 
Theorem \ref{thm:tildeA}. 

\begin{lemma}
\label{lem:Kirby diag B(l,m,n)}
The 4-manifold $B(l,m,n)$ is represented by the Kirby diagram
shown in Figure \ref{reconstruct_Bing} (c). 
\end{lemma}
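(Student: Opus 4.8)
The plan is to follow the exact same template used in Lemma~\ref{lem:Kirby diag A(m,n)} and Lemma~\ref{lem:Kirby diag tildeA}, since Bing's house $B$ is again a special polyhedron that collapses down to a manageable almost-special core. First I would set $P_B$ to be a small regular neighborhood of the singular set $Sing(B)$, analogous to $P_A$ and $P_{\tilde{A}}$, so that $B$ is recovered from $P_B$ by attaching three disks, one along each of the boundary curves $\partial(e_3\setminus P_B)$, $\partial(e_4\setminus P_B)$ and $\partial(e_5\setminus P_B)$ corresponding to the three disk regions labelled in Figure~\ref{Bing}. The geometry here is richer because $B$ has \emph{two} true vertices rather than one, so $P_B$ will carry two trivalent vertices in its singular set; but this only means I track more triple lines, not that the method changes.

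Next I would choose an appropriate surface-with-boundary $Q_B$ (the analogue of the pair of pants $Q_A$ and the punctured torus $Q_{\tilde{A}}$) together with three immersed curves $C_3, C_4, C_5$ carrying over/under information, such that collapsing the boundary region(s) of the polyhedron $Q'_B$ — obtained from $Q_B$ by capping off the $C_i$ with disks $D_i$ — reproduces $B$, with $D_i\subset e_i$. I would then produce a candidate Kirby diagram, Figure~\ref{reconstruct_Bing}~(a), containing three attaching circles $L_3,L_4,L_5$ with framing coefficients $L,M,N$, and verify two things: that $S^3\setminus(\text{dotted circles})$ collapses onto $Q_B$ with the $L_i$ projecting onto the $C_i$, and hence that the reconstructed $4$-manifold has shadow $Q'_B$ and therefore $B$. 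The heart of the computation is then, via Lemma~\ref{lem:Kirby to shadow}, to relate the framing coefficients $L,M,N$ to the prescribed gleams $l,m,n$: I would compute each framing $F_i$ with respect to $Q_B$, read off the twist numbers $tw(C_i)$, set $\gl(D_i)=(\text{framing})-tw(C_i)$, and then account for the gleam shifts induced by collapsing the boundary regions (the $\pm\frac{1}{2}$ contributions at self-crossings and crossings with $Sing(Q_B)$, as in Figure~\ref{the contribution}). Solving the resulting linear relations gives $L,M,N$ in terms of $l,m,n$.

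The main obstacle I anticipate is bookkeeping the local gleam contributions correctly in the presence of two true vertices and the more intricate singular set of Bing's house. Because the triple lines of $B$ interact with the attaching curves at more crossing points than in the one-true-vertex cases, I expect several $\pm\frac{1}{2}$ corrections to appear when collapsing the boundary regions, and getting every sign and every multiplicity right — so that the three gleam equations come out consistent and invertible — is the delicate step. I would double-check this against the known answer by confirming that the reconstructed framings produce a diagram matching Figure~\ref{reconstruct_Bing}~(c) after the expected simplifications. Finally, as in the earlier lemmas, I would cancel any canceling $1$--$2$ handle pair arising from a dotted circle that geometrically cancels one of the $L_i$, and perform an isotopy to arrive at the stated diagram in Figure~\ref{reconstruct_Bing}~(c), completing the identification of $B(l,m,n)$.
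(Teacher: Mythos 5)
Your proposal follows essentially the same route as the paper's proof: take $P_B$ to be a neighborhood of $Sing(B)$, find a surface $Q_B$ (in the paper, a torus with two boundary components) carrying immersed curves $C_3,C_4,C_5$ whose capped-off polyhedron collapses to recover $B$, embed $Q_B$ in a candidate Kirby diagram, use Lemma~\ref{lem:Kirby to shadow} together with the twist numbers and the gleam corrections from collapsing boundary regions to solve $L=l$, $M=m$, $N=n$, and finish by handle cancellation and isotopy. The paper's proof is exactly this computation (with $tw(C_i)=0$ for all $i$ and the local contributions at $e_3$ cancelling), so your plan is correct and matches it.
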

\begin{proof}
The proof is similar to Lemma \ref{lem:Kirby diag A(m,n)} 
and \ref{lem:Kirby diag tildeA}. 
Let $P_{B}$ be a small regular neighborhood of the singular set 
of $B$ shown in Figure \ref{Nbd(Bing)}. 
It is easy to check that $B$ is homeomorphic to 
$P_{B}$ attached three disks along the boundary components 
$\partial (e_3\setminus P_{B}), \partial (e_4\setminus P_{B})$ and 
$\partial (e_5\setminus P_{B})$ shown in the figure. 

Now let us consider the torus $Q_{B}$ with two boundary components 
as shown in Figure \ref{2-punctured_torus}, 
where $\partial _4$ and $\partial _5$ are the boundary components, 
and $C_i$ ($i=3,4,5$) is the immersed curve on $Q_{B}$ 
equipped with over/under information at each double point. 

Let $Q'_{B}$ be the almost-special polyhedron obtained from $Q_{B}$ 
by attaching three disks $D_3, D_4$ and $D_5$ 
to $C_3,C_4$ and $C_5$ along the boundary respectively. 
It is easy to check that the almost-special polyhedron obtained from $Q'_{B}$ by collapsing 
the boundary region is $B$. 
Note that $D_i\subset e_i$ for $i=3,4,5$ under this identification. 

Next we consider the Kirby diagram shown in Figure \ref{reconstruct_Bing} (a). 
In the figure, $L_3,L_4$ and $L_5$ are attaching circles of 2-handles, 
whose framing coefficients are represented by 
$L, M$ and $N$ as in the figure respectively. 
We can see that $Q_B$ can be embedded in the Kirby diagram 
in Figure \ref{reconstruct_Bing} (a) such that 
$S^3\setminus (\text{dotted circles})$ collapses to $Q_B$ and 
$L_i$ is projected onto $C_i$ for $i=3,4,5$. 
We can also see that the 4-manifold given by 
Figure \ref{reconstruct_Bing} (a) has a shadow $Q'_B$, 
and then also $B$. 

\begin{figure}
\begin{tabular}{ccc}
\begin{minipage}{0.41\hsize}
	\begin{center}
	\includegraphics[width=4.9cm]{sing_Bing.eps}
	\caption{The almost-special polyhedron $P_B$.}
	\label{Nbd(Bing)}
	\end{center}
\end{minipage}
\begin{minipage}{0.05\hsize}
\hspace{1em}
\end{minipage}
\begin{minipage}{0.42\hsize}
\vspace*{3ex}
	\begin{center}
	\includegraphics[width=4.9cm]{2-punctured_torus.eps}
	\caption{The torus $Q_B$ with two boundary components 
and immersed curves $C_3,C_4$ and $C_5$.}
	\label{2-punctured_torus}
	\end{center}
\end{minipage}
\end{tabular}
\begin{center}
	\includegraphics[width=1\hsize]{reconstruct_Bing.eps}
	\caption{}
	\label{reconstruct_Bing}
\end{center}
\end{figure}

Let $F_i$ ($i=3,4,5$) be the framing with respect to $Q_{B}$ of ${C}_i$. 
Each ${F}_i$ is an annulus. 
We get the twist number $tw(C_i) = 0$ for each $i=3,4,5$. 
Therefore, $\gl ({D}_3) = L - 0$, $\gl ({D}_4) = M - 0$ and $\gl(D_5) = N - 0$ 
by Lemma \ref{lem:Kirby to shadow}. 
Considering the changes of the gleams by collapsing of the boundary regions, 
we get 
\begin{align*}
\gl (e_3) &= l = L + (4\cdot \frac{1}{2} - 4\cdot \frac{1}{2}), \\
\gl (e_4) &= m = M + 0, \\
\gl (e_5) &= n = N + 0. \\
\end{align*}
Hence $L=l, M=m$ and $N = n$. 
Thus we get the Kirby diagram of $B(l,m,n)$. 
By isotopy, we get Figure \ref{reconstruct_Bing} (b). 
We erase the canceling pairs and get Figure \ref{reconstruct_Bing} (c). 
\end{proof}

We now define 
\begin{align*}
f_{(m,n)}:\partial B(0,m,n)\to \partial B(0,m,n)
\end{align*}
by the involution obtained by first surgering $S^1 \times B^3$ to $B^2 \times S^2$ 
in the interior of $B(0,m,n)$, 
then surgering the other embedded $B^2 \times S^2$ back to $S^1 \times B^3$. 
We notice the Kirby diagram of $B(l,m,n)$ is a symmetric link, 
and the involution can be done by replacing the dot and g$0$h in the diagram. 

\begin{figure}
	\begin{center}
	\includegraphics[width=6cm]{Legendred_B.eps}
	\caption{
}
	\label{Legendred_B(l,m,n)}
	\end{center}
\end{figure}

\begin{proof}[Proof of Theorem \ref{thm:B(l,m,n)}]
\noindent (1)
Set $|m|,|n|\geq 3$. 
Bing's house is a special polyhedron. 
The notation sl is introduced in \cite{IK14}, 
which represents the slope length of the Dehn filling 
corresponding to attaching a solid torus for each region of 
a special polyhedron. 
In our case, the values of sl on the three regions are
\begin{align*}
\text{sl} (e_1) &= \sqrt{4\gl (e_1)^2 + k(e_1)^2}= \sqrt{4l^2 + 100} ,\\
\text{sl} (e_2) &= \sqrt{4\gl (e_2)^2 + k(e_2)^2}= \sqrt{4m^2 + 1} ,\\
\text{sl} (e_3) &= \sqrt{4\gl (e_3)^2 + k(e_3)^2}= \sqrt{4n^2 + 1} . 
\end{align*}
Since these values are greater than 6, 
the boundary of $B(l,m,n)$ has a hyperbolic structure (cf. \cite{Lac00,CT08,IK14}). 
Since Bing's house is contractible then $B(l,m,n)$ is contractible. 
$B(l,m,n)$ satisfies the condition of the handle decomposition of
a Mazur manifold by the Kirby diagram shown in Figure \ref{reconstruct_Bing} (c). 
Hence $B(l,m,n)$ is Mazur type. 

\begin{figure}
\begin{center}
	\includegraphics[height=3.1cm]{B1andB2.eps}
	\caption{}
	\label{B1andB2}
\end{center}
\end{figure}

\vspace{1ex}
\noindent (2)
The proof is similar to the proof of Theorem \ref{thm:tildeA}. 
Set $l=0$ and $m,n<0$. 
Let us deform the attaching circle of $B(0,m,n)$ 
to a Legendrian knot with respect to the canonical contact structure 
on $S^1\times S^2$ as shown in Figure \ref{Legendred_B(l,m,n)} 
(in which we adopt the ball notation for the 1-handle). 
Then we can get the Thurston-Bennequin number 
\[
tb = wr - \sharp \{ \text{left cusps}\} 
= (2|m|+2|n|) - ((2|m|-1)+(2|n|-1))
=2. 
\]
Thus $B(0,m,n)$ is a Stein domain 
since the framing coefficient $0$ of the 2-handle is less than $tb=2$
(Eliashberg criterion \cite{Eli90, Gom98}). 

Next we prove that $(B(0,m,n),f_{(m,n)})$ is a cork. 
The basic idea of the proof is in \cite[Theorem 2.5.]{AY08}. 

$f_{(m,n)}$ extends to a self-homeomorphism of $B(0,m,n)$ 
since the boundary of $B(0,m,n)$ is a homology sphere 
and $B(0,m,n)$ is contractible 
(see \cite{Boy86} for a general discussion).

Let $B_1(m,n)$ and $B_2(m,n)$ be compact oriented 4-manifolds with boundary 
as shown in Figure \ref{B1andB2}. 
Note that 
$B_1(m,n)$ and $B_2(m,n)$ are obtained from $B(0,m,n)$ by attaching a 2-handle 
and $B_2(m,n)$ is diffeomorphic to the 4-manifold obtained from $B_1(m,n)$ 
by removing the interior of $B(0,m,n)$ and gluing it via $f_{(m,n)}$. 
Since $f_{(m,n)}$ extends to a self-homeomorphism of $B(0,m,n)$, 
$B_1(m,n)$ and $B_2(m,n)$ are homeomorphic. 
It is easy to check that $B_1(m,n)$ is a Stein domain 
(cf. Figure \ref{Legendred_B(l,m,n)}). 
On the other hand $B_2(m,n)$ is not a Stein domain since it has a 
2-sphere with self-intersection number $-1$ (cf. \cite{LM98}). 
Hence $B_1(m,n)$ and $B_2(m,n)$ are not diffeomorphic, that is, 
$f_{(m,n)}$ can not extend to a self-diffeomorphism of $B(0,m,n)$. 
\end{proof}

\begin{corollary}
\label{cor: Bing}
If $m$ and $n$ are the same sign, then $B(0,m,n)$ is Mazur type. 
\end{corollary}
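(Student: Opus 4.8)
The plan is to observe that the two structural ingredients for ``Mazur type'' are already available, so that only the boundary needs attention. Since Bing's house is contractible, $B(0,m,n)$ is contractible for every $m,n$, and Lemma~\ref{lem:Kirby diag B(l,m,n)} exhibits a handle decomposition with a single $0$-handle, a single $1$-handle and a single $2$-handle. Thus $B(0,m,n)$ is already a candidate Mazur manifold, and it only remains to verify that $\partial B(0,m,n)$ is not diffeomorphic to $S^3$ (which in particular also rules out the $4$-ball). I would treat the two sign cases in turn.

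For $m,n<0$, Theorem~\ref{thm:B(l,m,n)}(2) gives that $(B(0,m,n),f_{(m,n)})$ is a cork, so $f_{(m,n)}$ does not extend to a self-diffeomorphism of $B(0,m,n)$. I would use this to exclude an $S^3$ boundary directly: were $\partial B(0,m,n)=S^3$, the orientation-preserving involution $f_{(m,n)}$ would be isotopic to $\mathrm{id}_{S^3}$ because $\pi_0\mathrm{Diff}^+(S^3)$ is trivial, and interpolating along such an isotopy on a collar $\partial B(0,m,n)\times[0,1]$ (and taking the identity elsewhere) would extend $f_{(m,n)}$ to a self-diffeomorphism of $B(0,m,n)$, contradicting the cork property. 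Hence $\partial B(0,m,n)\neq S^3$ when $m,n<0$.

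For $m,n>0$ I would reduce to the previous case by reversing gleams. Reconstructing from a shadow with every gleam negated produces the orientation-reversed $4$-manifold, so applying this to $B$ with gleams $(\gl(e_3),\gl(e_4),\gl(e_5))=(0,m,n)$ yields $B(0,-m,-n)\cong -B(0,m,n)$. Since $-m,-n<0$, the previous paragraph gives $\partial B(0,-m,-n)\neq S^3$; and as $\partial B(0,-m,-n)=-\partial B(0,m,n)$ while $S^3\cong -S^3$, we obtain $\partial B(0,m,n)\neq S^3$ as well. This settles both cases.

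The substantive input lies entirely in the negative-sign case, where the non-$S^3$ boundary is read off from the cork property of Theorem~\ref{thm:B(l,m,n)}(2) rather than recomputed by hand; note that this route, unlike the hyperbolicity argument of Theorem~\ref{thm:B(l,m,n)}(1), also covers the small values such as $B(0,-1,-1)$. The positive-sign case is then purely formal via orientation reversal, so I anticipate no difficulty there. The only place deserving care is the isotopy-extension step excluding an $S^3$ boundary for a cork, together with the observation that $f_{(m,n)}$ is orientation-preserving, which is immediate from the surgery description used to define it.
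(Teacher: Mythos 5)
Your proof is correct and follows essentially the same route as the paper: for $m,n<0$ both arguments hinge on the fact that an $S^3$ boundary would force the involution $f_{(m,n)}$ to be isotopic to the identity (Cerf), contradicting what was established in Theorem~\ref{thm:B(l,m,n)}(2), and for $m,n>0$ both reduce to the negative case by negating gleams and reversing orientation. The only cosmetic difference is that you invoke the cork property as a black box via a collar-isotopy extension, whereas the paper phrases the same contradiction through the auxiliary manifolds $B_1(m,n)$ and $B_2(m,n)$ from that theorem's proof.
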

\begin{proof}
Suppose $m,n<0$. 
If $\partial B(0,m,n) \cong S^3$, then $f_{(m,n)}$ is isotopic to the identity. 
Hence $B_1(m,n)$ and $B_2(m,n)$ are diffeomorphic, which is a contradiction. 
Therefore $B(0,m,n)$ is Mazur type. 

If $m,n>0$, since 
\begin{align*}
\partial B(0,m,n) \cong \partial (-B(0,-m,-n)) \cong -\partial (B(0,-m,-n)), 
\end{align*}
the assertion holds by the same argument. 
\end{proof}

\begin{remark}
The 4-manifold $B(0,-1,-1)$ is diffeomorphic to $\overline{W}_1$, 
which is a cork introduced by Akbulut and Yasui in \cite{AY08}. 
\end{remark}


\end{document}